\documentclass{article}
\usepackage[normalem]{ulem}
\usepackage{comment}
\usepackage{xcolor}
\usepackage{graphicx}
\usepackage{amsfonts}
\usepackage{amsmath}
\usepackage{amsthm}
\usepackage{natbib}
\usepackage[utf8]{inputenc}
\usepackage[T1]{fontenc}
\usepackage[affil-it]{authblk}
\newtheorem{theorem}{Theorem}

\newtheorem{lemma}{Lemma}
\newtheorem{proposition}{Proposition}
\newtheorem{remark}{Remark}
\newtheorem{assumption}{Assumption}
\newtheorem{definition}{Definition}
\def\indicator#1{\mathbf{1}_{\{#1\}}}
\newcommand\var{\operatorname{\mathbf{var}}}

\newcommand{\R}{\mathbb{R}}%{\mathds{R}}

\renewcommand\P{\operatorname{\mathbf{P}}}
\newcommand\E{\operatorname{\mathbf{E}}}

\usepackage{url}
\makeatletter
\newcommand*{\bigcdot}{}% Check if undefined
\DeclareRobustCommand*{\bigcdot}{%
  \mathbin{\mathpalette\bigcdot@{}}%
}
\newcommand*{\bigcdot@scalefactor}{.5}
\newcommand*{\bigcdot@widthfactor}{1.15}
\newcommand*{\bigcdot@}[2]{%
  \sbox0{$#1\vcenter{}$}% math axis
  \sbox2{$#1\cdot\m@th$}%
  \hbox to \bigcdot@widthfactor\wd2{%
    \hfil
    \raise\ht0\hbox{%
      \scalebox{\bigcdot@scalefactor}{%
        \lower\ht0\hbox{$#1\bullet\m@th$}%
      }%
    }%
    \hfil
  }%
}
\makeatother
\begin{document}
\title{Singular control with state-dependent costs\\for Lévy processes}
%\author{Ernesto Mordecki, Nora Muler and Facundo Oliú}

\author{Ernesto Mordecki\footnote{Centro de Matemática, Facultad de Ciencias, Universidad de la República, Montevideo, Uruguay, email: mordecki@cmat.edu.uy} ,  Nora Muler\footnote{Departamento de Matemática y Estadística, Universidad Torcuato di Tella, Buenos Aires, Argentina} \ and Facundo Oliú\footnote{Ingeniería Forestal, Centro Universitario de Tacuarembó, Universidad de la República, Tacuarembó, Uruguay}}

\date{\today}

\maketitle
\begin{abstract}
We study a discounted singular stochastic control problem driven by a general Lévy process, where the objective is to minimize a cost functional composed of a running cost and a control cost that depends on the current state of the process. 
We first establish a Hamilton–Jacobi–Bellman (HJB)-type verification theorem providing sufficient conditions under which a reflecting barrier strategy is optimal and characterizing the value function.
Our main contribution is to connect this control problem with an associated optimal stopping problem: we prove that the optimal reflection threshold coincides with the optimal stopping boundary of the auxiliary problem. 
This connection allows us to characterize the optimal strategy through probabilistic tools and leads to explicit or semi-explicit solutions in several relevant cases. 
We illustrate the results with several examples, including an application to pollution abatement.
\end{abstract}
\vspace{1em}  
\noindent \textbf{Keywords:} Singular Control, Lévy Processes, Optimal Stopping.

\medskip 

\noindent \textbf{AMS Subject Classification:} 60G51, 60G40, 93E20.
\section{Introduction}
%%%%%%%%%%%%%%%%%%%%%%%%%%%%%%%%%%%%%%%%%%%%%%%%%%%%%%%%%%
%%%%%%%%  SINGULAR STOCHASTIC CONTROL PROBLEMS
%%%%%%%%%%%%%%%%%%%%%%%%%%%%%%%%%%%%%%%%%%%%%%%%%%%%%%%%%%
Singular stochastic control problems arise in many areas of applied probability. 
Representative works include: in stochastic finance and operations research, 
\cite{KAR}, 
\cite{HATA}, and 
\cite{OKSU}; 
in insurance mathematics, 
\cite{APP}, \cite{SCH}, \cite{PAU}, and \cite{AZMU}; 
and for general stochastic control theory, 
\cite{BELI} and 
\cite{FLESO}. 
%%%%%%%%%%%%%%%%%%%%%%%%%%%%%%%%%%%%%%%%%%%%%%%%%%%%%%%%%%
%%%%%%%%  DIFFERENT MODELS
%%%%%%%%%%%%%%%%%%%%%%%%%%%%%%%%%%%%%%%%%%%%%%%%%%%%%%%%%%
However, most of the existing literature focuses on diffusion processes or on one-sided L\'evy models (with compound Poisson processes with one-sided jumps as a particular case).
The general L\'evy process setting is significantly more challenging than the spectrally one-sided case because the scale function approach is no longer available. 
Furthermore, the one-sided assumption, while convenient for analytical tractability, is often unrealistic in practice. 
For example, in mathematical finance, asset prices are known to empirically exhibit both upward and downward jumps, and in management or environmental contexts, system dynamics may involve shocks in both directions.
%%%%%%%%%%%%%%%%%%%%%%%%%%%%%%%%%%%%%%%%%%%%%%%%%%%%%%%%%%
%%%%%%%%  OUR PAPER
%%%%%%%%%%%%%%%%%%%%%%%%%%%%%%%%%%%%%%%%%%%%%%%%%%%%%%%%%%

In the present paper, we consider a general L\'evy process and aim to minimize a cost functional that depends on both the trajectory of the controlled state process and the applied control. 
More specifically, our results extend previous work of  \cite{NOBAYAMAZAKI}, 
which analyzed the case in which the control cost is proportional to the amount of control and established the optimality of barrier-type strategies in singular control problems for general L\'evy processes with convex running costs.
In our case, in contrast, the marginal cost of control explicitly depends on the current state of the controlled process, and hence the total control cost can become non-linear. 
This state dependence introduces substantial additional technical challenges due to the loss of linearity in the marginal cost. 
Furthermore, our approach allows the consideration of exponentially bounded 
running cost functions (instead of polynomially bounded), 
which are common in mathematical finance.
The extension is particularly relevant in applications where intervention costs vary with the system's state, such as resource extraction, pollution abatement, or energy regulation. 
A similar state-dependent structure was recently studied by  \cite{JMP}, 
who addressed the maximization of cumulative rewards under a spectrally negative L\'evy process. 
%%%%%%%%%%%%%%%%%%%%%%%%%%%%%%%%%%%%%%%%%%%%%%%%%%%%%%%%%%
%%%%%%%%  MAIN RESULTS
%%%%%%%%%%%%%%%%%%%%%%%%%%%%%%%%%%%%%%%%%%%%%%%%%%%%%%%%%%

The first contribution of the paper is Theorem~\ref{T:Verification}, a Hamilton--Jacobi--Bellman (HJB)-type verification theorem that gives sufficient conditions under which the optimal policy is a reflecting barrier strategy. 
More precisely, we show that the existence of a function $u$ satisfying a system of HJB inequalities, with a critical threshold $x^*$ separating the two regimes, implies that the reflecting control at $x^*$ is optimal.

Our second and main contribution is Theorem~\ref{T:DISCOUNTEDPROBLEMSOLUTION}, which establishes a connection between the control problem and an associated optimal stopping problem (OSP): the critical threshold $x^*$ of Theorem~\ref{T:Verification} coincides with the optimal stopping boundary of the OSP, and the function $u$ of Theorem~\ref{T:Verification} is a primitive of its value function.

The resulting method of solution is structurally different from the one developed in \cite{NOBAYAMAZAKI} for the constant control cost case.
Furthermore, the connection between control and stopping allows explicit or semi-explicit solutions in several cases, particularly for polynomial and
exponential cost functions, through averaging representations based on the fluctuation theory for L\'{e}vy processes. 
To summarize, our results generalize previous work in three directions: from constant to state-dependent control costs, from polynomial to exponentially bounded running costs, and by clarifying how state dependence modifies the optimal threshold and value function while preserving analytical tractability.

%%%%%%%%%%%%%%%%%%%%%%%%%%%%%%%%%%%%%%%%%%%%%%%%%%%%%%%%%%
%%%%%%%%  CONNECTIONS CONTROL OSP
%%%%%%%%%%%%%%%%%%%%%%%%%%%%%%%%%%%%%%%%%%%%%%%%%%%%%%%%%%
The connection between control and optimal stopping problems has been known since the seminal work of \cite{KAR}, who studied the problem of tracking a Brownian motion by a process of bounded variation under various cost functionals. 
A more recent treatment of this connection in the diffusion setting is given in \cite{CDF}, 
where further references can be found. 

On the other hand, solutions to optimal stopping problems for L\'evy processes have been known since the paper by \cite{MOR}, with further developments summarized in the book by \cite{KRI}. 
The idea is to express the solution of the OSP through averaging functions involving the supremum and/or infimum of the L\'evy process stopped at an independent exponential time.
When the distribution of these random variables is explicit --typically computed via the Wiener--Hopf factorization--they yield the explicit solution of the OSP. The connection presented here allows us to apply such results to control problems.
%%%%%%%%%%%%%%%%%%%%%%%%%%%%%%%%%%%%%%%%%%%%%%%%%%%%%%%%%%
%%%%%%%%  ROAD MAP OF THE PAPER
%%%%%%%%%%%%%%%%%%%%%%%%%%%%%%%%%%%%%%%%%%%%%%%%%%%%%%%%%%
%\subsection{Roadmap}

The rest of the paper is organized as follows. 
%%%%%%%%%%%%
Section 2 introduces the framework and presents the main results, Theorems~\ref{T:Verification} and~\ref{T:DISCOUNTEDPROBLEMSOLUTION}.
%%%%%%%%%%%%%
Section 3 contains the proofs of these results.
%%%%%%%%%%%%%%
Section 4 provides a collection of examples. In some cases, the solution can be expressed solely in terms of the exponential moments of the infimum of the process stopped at an independent exponential time. In particular, we include an application to pollution abatement.
\section{Main results}%\label{S:framework}

\subsection{Preliminaries}\label{subsection:preliminaries}

Let $X=\{X_t\}_{t\geq 0}$ be a non-trivial L\'evy process with finite mean, 
defined on a stochastic basis 
${\cal B}=(\Omega, {\cal F}, {\bf F}=({\cal F}_t)_{t\geq 0^-}, \P_x)$ starting from $X_{0^-}=x$.
Assume that the filtration is right-continuous and complete (see Definition 1.3 in \cite{JJAS}).
Denote by $\E_x$ the expected value associated to the probability measure $\P_x$, 
and let $\E= \E_0$ and $\P=\P_0$.   
The L\'evy--Khintchine formula characterizes the law of the process through its characteristic exponent
$$
\phi (z)= \log \E( e^{z X_1}), \qquad z=i\theta \in i\mathbb{R},
$$
which in our setting takes the form
\begin{equation*}
\phi(z)= \mu z+\frac{\sigma^2}{2}z^2+\int_{\R}\left(e^{z y}-1-z y\right)\Pi(dy),
\end{equation*}
where $\mu=\E(X_1)\in\R$, $\sigma\geq 0$ and $\Pi(dy)$ is a non-negative measure (the \emph{jump measure}) satisfying $\int_{\R}(y^2\wedge |y|)\Pi(dy)<\infty$, which corresponds to the finite-mean assumption on $X$. 
This L\'evy process, being a special semi-martingale (see II.2.29 in \cite{JJAS}), 
can be written as the sum of a deterministic drift, an independent Brownian component, and a pure-jump martingale:
\begin{equation*}
X_t= X_0 +\mu t +\sigma W_t + \int_{[0,t]\times\R} y\, \tilde N(ds,dy),
\end{equation*}
where $\tilde N(ds,dy)=N(ds,dy)-ds\Pi(dy)$ is a compensated Poisson random measure, 
$N(ds,dy)$ being the jump measure constructed from $X$ (see II.1.16 in \cite{JJAS}),
and $\lbrace W_t \rbrace_{t \geq 0}$ is an independent Brownian motion. 
The infinitesimal generator of the process $X$ is given by
\begin{equation*}
\mathcal{L}u (x)=\lim_{t \to 0^+} \frac{\displaystyle \E_x (u(X_t))-u(x)  }{\displaystyle t} ,
\end{equation*}
which is defined for every function $u\colon\mathbb{R}\to\mathbb{R}$ for which the limit exists at every $x \in \mathbb{R}$; in that case, we say $u$ belongs to the domain of the infinitesimal generator.
For general references on L\'evy processes see 
\cite{B}, \cite{KIS}, and \cite{KRI}.

We consider the following discounted minimization problem starting from $X_{0^-} = x$, where the controlled process $X^D = X - D$ and the class of admissible controls are formally introduced in the next subsection:
\begin{multline*}
   \inf_{D }\E_x \left[
 \int_{0^+}^{\infty}e^{-\delta s} (f(X_{s^-}^{D})ds + c \left(X^{D}_{s^-}\right) dD^c_s)  
 \right.  \\ 
    +  \left. \sum_{0^- < s < \infty}e^{-\delta s} \Delta D_s \int_0^{1} c \left(X^{D}_{s^-} + \Delta X_s -\lambda \Delta D_s \right) d\lambda  \right], 
\end{multline*} 
where $\delta>0$ is a fixed discount rate, $f$ is the running cost, and $c$ is the state-dependent marginal cost of control. Precise assumptions on $f$ and $c$ are also stated in the next subsection.

The objective of this article is to connect this control problem with an optimal stopping problem, which we call the \emph{associated} OSP. 
To solve the OSP (and hence the control problem), it is natural to introduce the supremum and infimum of the process up to an independent exponential time (see for instance \cite{SURYA} and \cite{M.M(2015)}).
\begin{definition}
Given $\delta>0$, consider
\begin{equation*}
S=\sup\{X_t\colon 0\leq t\leq e_{\delta}\},\quad
I=\inf\{X_t\colon 0\leq t\leq e_{\delta}\},
\end{equation*}
where $e_{\delta}$ is an exponential random variable of parameter $\delta$, independent of $X$.
\end{definition}
\begin{assumption}\label{A:AssumptionsProcess}
For the uncontrolled process, we assume that there is a constant $\theta>0$ in the domain of $\phi$, such that $\max \left(\phi (\theta), \phi(-\theta)\right)< \delta$.
\end{assumption}
This assumption requires the existence of the exponential moments $\E(e^{\pm \theta X_1})$, which by \cite[Theorem 25.3]{KIS} is equivalent to the integrability condition $\int_{\vert y \vert > 1} e^{\pm \theta y}\, \Pi(dy) < \infty$. Under this condition, the characteristic exponent $\phi$ extends analytically to the strip $\{z\in\mathbb{C}\colon \mathrm{Re}(z)\in[-\theta,\theta]\}$ (see \cite[Theorem 25.17]{KIS}), so that the values $\phi(\theta)$ and $\phi(-\theta)$ appearing in the assumption are well defined as real numbers. The condition $\max(\phi(\theta),\phi(-\theta))<\delta$ ensures, via the bound $\E(e^{\pm \theta X_t}) = e^{t\phi(\pm\theta)}$, the integrability of the discounted exponential moments needed throughout the paper.
\begin{assumption}\label{A:Assumptions2} 
Regarding the control and running costs we assume the following.
\begin{itemize} 
\item[$(\rm{i})$]\textbf{Control cost.} Let $c\colon\mathbb{R}\rightarrow\mathbb{R}$ be a continuously differentiable function in the domain of the infinitesimal generator, 
modeling the marginal cost (or reward, when negative) of applying control.
\item[$(\rm{ii})$]
\textbf{Running cost.} Let $f \colon \mathbb{R}  \rightarrow \mathbb{R}$ be a continuously differentiable function modeling the running cost (or reward, when negative) of the controlled process. 
We assume that one of the following conditions holds:
\begin{itemize}
    \item[\rm(a)]
 $\limsup_{x \to -\infty} f(x) \leq M \ \text{for some }M  \in \mathbb{R};$
\item[\rm(b)]  $f \vert_{(-\infty,-M]}$ is decreasing for some $M>0$.
\end{itemize}

\item[$(\rm{iii})$]\textbf{Exponential growth bound.}
There is a constant $K>0$ such that 
$$ 
\max \left( \vert f(x)\vert ,\vert f'(x) \vert ,  \vert c(x)  \vert \right) \leq K(1+\cosh(\theta x)) \quad \text{for all } x\in \mathbb{R}.
$$ 
In particular, this condition implies that the map
\begin{equation}\label{eq:assumptions}
x\ \mapsto \E_x \int_0^{\infty}\vert f(X_s) \vert e^{-\delta s}ds   
\end{equation}
is finite and continuous.
\end{itemize}
\end{assumption}
\begin{remark}
The conditions in (ii) of Assumption \ref{A:Assumptions2} are mild. For instance, they are satisfied by functions that are either concave or convex on some negative half-line $(-\infty, -M]$, which in particular includes any linear combination of a polynomial and exponentials of the form $e^{\pm\alpha x}$ with $0<\alpha\leq\theta$.
\end{remark}
We now introduce a function $r$ that will play a key role in the connection between the control problem and the associated OSP. It is defined by
\begin{equation}\label{eq:r}
    r(x)=\delta^{-1}(\delta- \mathcal{L})c(x).   
\end{equation}
The motivation for this definition is that, under our assumptions, $r$ satisfies $\E_x r(X_{e_\delta}) = c(x)$, so that $r$ recovers the control cost $c$ through the resolvent of the killed process. This identity is exploited in Section~\ref{S:Verification} to relate $c$ to the OSP value function.

If the uncontrolled process has unbounded variation, the analysis of the associated OSP requires smoothness of its value function, which we ensure through the following additional condition.
\begin{assumption}\label{A:Assumptions3Unboundedvariation}
Suppose the underlying process has unbounded variation. Then we assume that $r$ and $f'$ are continuously differentiable and that
$$
\max \left(\vert r'(x) \vert, \vert f''(x) \vert \right) \leq K(1+ \cosh(\theta x)).
$$
\end{assumption}
\begin{remark}
    If the functions $f$ and $c$ and their first and second derivatives are dominated by a polynomial, then Assumption \ref{A:Assumptions2}$(\rm{iii})$ holds.
\end{remark}
\begin{assumption}\label{A:Assumptions4}
We assume that the function
\begin{equation}\label{eq:CONDITION}
Q(x):=\E_x \left(f'- r\right)(I)
\end{equation}
has a root $x^\ast$, is non-positive in $(-\infty,x^{\ast})$ and non-decreasing in $[x^\ast,\infty)$. 
\end{assumption}
\begin{remark}
Under Assumption \ref{A:Assumptions2}, if $f'-r$ is non-decreasing and $\lim_{x \to -\infty}(f'-r)(x)<0<\lim_{x \to \infty}(f'-r)(x)$, then Assumption \ref{A:Assumptions4} holds; indeed, in this case $Q$ itself is non-decreasing. 
In particular, when $c$ is constant and $f$ is convex, these conditions are satisfied and we recover the setting of \cite{NOBAYAMAZAKI}.
\end{remark}
\begin{definition}\label{D:Admissiblestrategies}\textbf{Admissible controls.} 
An \emph{admissible control} is a non-negative $\mathbf{F}$-adapted process $D$ such that:
\vskip1mm\par\noindent
{\rm(i)} $D\colon\Omega \times \mathbb{R}_+ \rightarrow \mathbb{R}_+$ is right-continuous and non-decreasing almost surely, with $D_{0^-}=0$. The associated controlled process is defined by
\begin{equation*}
X^{D}_t= X_t-D_t , \qquad X_{0^-}=X_0=x,
\end{equation*}
and we write $d_0 := D_0 \geq 0$ for the (possibly nonzero) initial control, i.e., the instantaneous adjustment applied at time $t=0$.
%$$X_t^D:= X_t-D_t. $$
\vskip1mm\par\noindent
{\rm(ii)} $D_t$ has finite expectation for every $t\geq 0$.
\vskip1mm\par\noindent
{\rm(iii)} The following integrability condition holds:
\begin{align*}
 \E_x &\left(\int_{0^+}^{\infty}e^{-\delta s} (\vert f(X_{s^-}^{D})\vert ds + \vert c \left(X^{D}_{s^-}\right)\vert  dD^c_s)  \right.  \\ 
 +  &   \sum_{0^- < s < \infty}e^{-\delta s} \left.  \Delta D_s \left\vert  \int_0^{1} c \left(X^{D}_{s^-} + \Delta X_s -\lambda \Delta D_s \right)  d\lambda  \right\vert  \right) <\infty.
\end{align*}
%%%%
\vskip1mm\par\noindent
We denote by $\mathcal{A}$ the set of admissible controls.
\end{definition}
An important class of controls, in which the optimal control will be found, is that of \emph{reflecting controls}: those that, with the minimal push, keep the process within a half-line $(-\infty, b]$.
\begin{definition}\textbf{Reflecting controls and reflected processes.}
Given $b \in \mathbb{R}$, define the processes $D^b=\lbrace D_t^b \rbrace_{t \geq 0}$ and $X^b= \lbrace X_t^b \rbrace_{t \geq 0}$ by
$$
D^b_t:=\max \left\lbrace \sup_{0 \leq s \leq t}X_s-b ,\, 0 \right\rbrace , \quad X_t^b := X_t-D_t^b.
$$
We refer to $D^b$ as the \emph{reflecting control at $b$} and to $X^b$ as the \emph{process reflected at $b$}. 
In particular, when $x>b$ the initial value $D_0^b = (x-b)^+$ is positive, corresponding to an instantaneous downward adjustment $d_0^b$ at time $t=0$.
\end{definition}
%%%%%%%%%%%%%%%%%%%%%%%%%%%%%%%%%%
%%%%%%%%%%%%%%%%%%%%%%%%%%%%%%%%%%%%%%%%%%%%%%%%
%%%%%%%%%%%%%%%%%%%%%%%%%%%%%%%%%%%%%%%%%%%%%%%%
\subsection{The control problem and main results}
%%%%%%%%%%%%%%%%%%%%%%%%%%%%%%%%%%%%%%%%%%%%%%%%
%%%%%%%%%%%%%%%%%%%%%%%%%%%%%%%%%%%%%%%%%%%%%%%%
%%%%%%%%%%%%%%%%%%%%%%%%%%%%%%%%%%%%%%%%%%%%%%%%
\begin{definition}
Given $x\in\R$ and a control $D \in\mathcal{A}$,
we define the \emph{cost functional} 
\begin{align*}
 J_{\delta}(x,D):= &\E_x \left[
 \int_{0^+}^{\infty}e^{-\delta s} (f(X_{s^-}^{D})ds + c \left(X^{D}_{s^-}\right) dD^c_s)  
 \right.  \\ 
   &\quad +  \left. \sum_{0^- < s < \infty}e^{-\delta s} \Delta D_s \int_0^{1} c \left(X^{D}_{s^-} + \Delta X_s -\lambda \Delta D_s \right) d\lambda  \right],  
\end{align*}
and the value function
\begin{equation}\label{eq:ValueFunction}
    G_{\delta}(x)= \inf_{D \in \mathcal{A}} J_{\delta}(x,D).
\end{equation}
\end{definition}  
We are now in a position to present our main results; their proofs are given in the next section. The first is a Hamilton--Jacobi--Bellman (HJB)-type verification theorem that gives sufficient conditions for the optimality of a reflecting barrier strategy among all admissible controls.
\begin{theorem}\label{T:Verification}
Suppose there exist a constant $x^\ast$ and a function $u\colon\mathbb{R}\to\mathbb{R}$ such that:
\begin{itemize}
\item[$(\rm{i})$] $u$ is continuously differentiable if $X$ has bounded variation, and twice continuously differentiable if $X$ has unbounded variation;
\item[$(\rm{ii})$] $\vert u(x) \vert \leq K(1+ \cosh (\theta x))$ for all $x \in \mathbb{R}$ and some $K>0$;
\item[$(\rm{iii})$] $u$ satisfies the HJB system
\begin{align}\label{eq:TverificationCondition}
&\mathcal{L}u(x)- \delta u(x)+f(x) \geq 0 \quad\text{for all } x \in \mathbb{R}, 
\text{ with equality for } x<x^\ast, \nonumber \\
& u'(x)\leq c(x)   \quad\text{for all } x \in \mathbb{R}, \text{ with equality for } x\geq x^\ast.
\end{align}
\end{itemize}
Then, $u$ is the value function of the control problem \eqref{eq:ValueFunction} and
$x^\ast$ is the optimal reflecting barrier:
$$
u(x)=J_{\delta}(x,D^{x^\ast})=G_{\delta}(x).
$$
\end{theorem}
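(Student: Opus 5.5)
The plan is the classical two-step verification argument: first show $u\le J_\delta(x,D)$ for every $D\in\mathcal{A}$, then show equality for $D=D^{x^\ast}$. Both steps come from applying the Itô/Meyer formula for semimartingales to $e^{-\delta t}u(X^D_t)$, where $X^D=X-D$ is a semimartingale. This requires $u\in C^1$ in the bounded variation case (no second-order term appears) and $u\in C^2$ otherwise; this is exactly hypothesis (i).

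\textbf{Itô decomposition.} For a fixed admissible $D$, split $D$ into its continuous part $D^c$ and its jumps. Then
\begin{align*}
e^{-\delta t}u(X^D_t)-u(x) &= \int_{0}^{t} e^{-\delta s}(\mathcal{L}-\delta)u(X^D_{s^-})\,ds - \int_{0}^{t} e^{-\delta s}u'(X^D_{s^-})\,dD^c_s \\
&\quad + \sum_{0\le s\le t} e^{-\delta s}\bigl[u(X^D_{s^-}+\Delta X_s-\Delta D_s)-u(X^D_{s^-}+\Delta X_s)\bigr] + M_t ,
\end{align*}
where $M_t$ is a local martingale collecting the Brownian and compensated-jump terms. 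The jump increment of the control can be written as
\[
-\Delta D_s\int_0^1 u'(X^D_{s^-}+\Delta X_s-\lambda\Delta D_s)\,d\lambda ,
\]
and this form matches the control-cost term in $J_\delta$. We then use the two HJB inequalities: $(\mathcal{L}-\delta)u\ge -f$, and $u'\le c$ wherever $D$ acts. These give
\[
u(x)\le \E_x\!\left[e^{-\delta \tau_n}u(X^D_{\tau_n})\right]+\E_x\!\left[\int_0^{\tau_n}e^{-\delta s}f\,ds+\text{control costs up to }\tau_n\right],
\]
where $\tau_n$ localizes $M$. Here $\tau_n$ is chosen as the exit time of $X^D$ from $[-n,n]$ intersected with a localizing sequence, so that the martingale term has zero expectation.

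\textbf{Limits (main obstacle).} We let $n\to\infty$. The running and control cost terms converge to $J_\delta(x,D)$ by dominated convergence, using admissibility (iii). The hard part is proving $\liminf_n \E_x[e^{-\delta\tau_n}u(X^D_{\tau_n})]\ge 0$, or at least that its negative part vanishes. Since $X^D\le X$ and $|u|\le K(1+\cosh\theta x)$, we bound $e^{-\delta t}|u(X^D_t)|$ by $K e^{-\delta t}(1+e^{\theta X_t}+e^{-\theta X^D_t})$. The $e^{\theta X_t}$ term is handled by Assumption~\ref{A:AssumptionsProcess}: $e^{-\delta t}e^{\theta X_t}$ is a supermartingale with $\E\sup_t e^{-\delta t}e^{\theta X_t}<\infty$ by Doob, possibly after slightly reducing $\theta$, and $\phi(\theta)<\delta$ gives decay to $0$. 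The downward side $e^{-\theta X^D_t}=e^{-\theta X_t}e^{\theta D_t}$ is more delicate, because $D$ pushes the process down. For an arbitrary $D$ I would first reduce to controls with finite cost whose $e^{-\delta t}u(X^D_t)$ term is controlled. One way is to note that if $\E_x\int e^{-\delta s}|f(X^D_s)|ds<\infty$ then, along a subsequence of deterministic times $t_k$, $\E e^{-\delta t_k}|u(X^D_{t_k})|\to0$. This works provided $|u|$ is dominated by $|f|$ plus a constant at $-\infty$, which is where Assumption~\ref{A:Assumptions2}(ii) would be used. Alternatively, one can replace $D$ by the truncated control $D\wedge D'$, with $D'$ reflecting at a low level, and argue that truncation does not increase the cost. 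I expect this transversality step to be the main technical difficulty.

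\textbf{Equality for $D^{x^\ast}$.} Under $D^{x^\ast}$, $X^{x^\ast}_{s^-}\le x^\ast$, so for $s>0$ the generator term $(\mathcal{L}-\delta)u+f$ vanishes wherever $X^{x^\ast}_{s-}<x^\ast$. The time spent exactly at $x^\ast$ has Lebesgue measure zero, except in bounded-variation cases with drift toward $x^\ast$; those can be handled by continuity of $(\mathcal{L}-\delta)u$ from the left of $x^\ast$. Moreover $D^{x^\ast}$ increases only when the pre-control position $X^{D}_{s^-}+\Delta X_s-\lambda\Delta D_s$ lies in $[x^\ast,\infty)$, where $u'=c$. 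This also covers the initial jump $(x-x^\ast)^+$ and upward jumps of $X$ above $x^\ast$. Hence every inequality above becomes an equality. Finally, $X^{x^\ast}\le x^\ast$ and $X^{x^\ast}\ge I$-type lower bounds driven by $\inf X$ give the transversality condition directly via Assumption~\ref{A:AssumptionsProcess}. Admissibility of $D^{x^\ast}$ follows from $\E D_t^{x^\ast}\le \E\sup_{s\le t}X_s^+ +|x|<\infty$ and the exponential bounds. This yields $u(x)=J_\delta(x,D^{x^\ast})\le G_\delta(x)\le u(x)$.
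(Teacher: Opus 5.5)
Your It\^o step and your equality argument for $D^{x^\ast}$ are sound. Your remark that in the bounded-variation case the reflected process can spend positive Lebesgue time at $x^\ast$, handled by continuity of $(\mathcal{L}-\delta)u$, is a point the paper passes over quickly.

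The gap is in the transversality step for a general $D\in\mathcal{A}$, and it has two parts.

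First, you have the direction reversed. From $u(x)\le \E_x[\text{costs up to }\tau_n]+\E_x[e^{-\delta\tau_n}u(X^D_{\tau_n})]$ you need the terminal term to satisfy $\limsup\le 0$. So it is the \emph{positive} part of $u(X^D_{\tau_n})$ that must vanish. Proving $\liminf\ge 0$, or that the negative part vanishes, gives nothing.

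Second, the positive part is exactly the hard side, and you have no argument for it. Under Assumption~\ref{A:Assumptions2}(ii)(b), $f$ and hence $u$ may be large and positive at $-\infty$. A control that pushes $X^D$ far down then makes $e^{-\delta t}u^+(X^D_t)$ large. The bound $|u|\le K(1+\cosh\theta x)$ cannot help, because $e^{-\theta X^D_t}=e^{-\theta X_t}e^{\theta D_t}$ and $D$ has only a first moment.

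Your remedy assumes that $|u|$ is dominated by $|f|$ plus a constant at $-\infty$. This is neither a hypothesis of the theorem nor a consequence of Assumption~\ref{A:Assumptions2}(ii). For very negative $x$, $u(x)$ is essentially $\delta^{-1}\E_x f(X_{e_\delta})$. Take $f$ decreasing near $-\infty$ but alternating long flat stretches with steep rises, still within the $\cosh$ bounds. If $X$ can move downward, this quantity exceeds any multiple of $|f(x)|+1$ at suitable points. The truncation $D\wedge D'$ is also unsupported: with a state-dependent, possibly negative $c$ and a non-monotone $f$, there is no reason truncation lowers the cost.

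The missing idea is a one-sided comparison with the null control. Run your It\^o argument with $D\equiv 0$. Transversality is then easy because $X^0=X$ has the exponential moments of Assumption~\ref{A:AssumptionsProcess}, and you get $u\le J_\delta(\cdot,0)=\delta^{-1}\E_\cdot f(X_{e_\delta})$ (Lemma~\ref{L:inequality}).

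Now set $\tau=t\wedge T_n$. The strong Markov property gives $\E_x e^{-\delta\tau}u(X^D_\tau)\le \E_x e^{-\delta\tau}\int_0^\infty e^{-\delta s} f(X_{\tau+s}-X_\tau+X^D_\tau)\,ds$. Split according to the level of the argument of $f$:
\begin{itemize}
\item Above a fixed level, the growth bound and $X^D_\tau\le X_\tau$ reduce the term to the tail $\E_x\int_\tau^\infty e^{-\delta s}(2+e^{\theta X_s})\,ds$, which tends to $0$.
\item Below that level, in case (ii)(a) $f$ is bounded above, so the term is at most a constant times $\E_x e^{-\delta\tau}$.
\item Below that level in case (ii)(b), use $X_{\tau+s}-X_\tau+X^D_\tau\ge X^D_{\tau+s}$ together with monotonicity of $f$. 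This dominates the term by the tail $\E_x\int_\tau^\infty e^{-\delta s}|f(X^D_s)|\,ds$, which vanishes by Definition~\ref{D:Admissiblestrategies}(iii).
\end{itemize}
This is precisely where Assumption~\ref{A:Assumptions2}(ii) and admissibility enter. The argument only ever bounds $u$ from above, which is all the inequality $u\le J_\delta(\cdot,D)$ requires.

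A smaller point on the upward side: you cannot ``slightly reduce $\theta$'', since the growth bound on $u$ is tied to $\theta$. The tail-integral formulation above avoids needing $\E\sup_t e^{-\delta t+\theta X_t}$ at all.
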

Our second main result characterizes the verification function $u$ and the optimal threshold $x^\ast$ of Theorem~\ref{T:Verification} through the solution of an associated OSP.
\begin{theorem}\label{T:DISCOUNTEDPROBLEMSOLUTION}
Consider the optimal stopping problem
\begin{equation}\label{eq:OSP}
    v(x)= \inf_{\tau \in \mathcal{R}}\E_x \left(\int_0^{\tau}f'(X_s)e^{- \delta s}ds +c(X_\tau)  e^{-\delta \tau} \right)
\end{equation}
where $\mathcal{R}$ is the set of $\mathbf{F}$-stopping times, and let $x^\ast$ be the root introduced in Assumption~\ref{A:Assumptions4}. 
Then the stopping time
\begin{equation}\label{eq:taustar}
    \tau^\ast=\inf\{t\geq 0\colon X_t\geq x^\ast\}
\end{equation}
is optimal, that is
\begin{equation}\label{eq:OSPstar}
v(x)= \E_x \left(\int_0^{\tau^\ast}f'(X_s) e^{- \delta s}ds +c (X_{\tau^\ast}) e^{-\delta \tau^\ast}\right).
\end{equation}
Furthermore, a suitably chosen primitive of $v$ satisfies the conditions
of Theorem~\ref{T:Verification}; 
in other words, the function $u$ of Theorem~\ref{T:Verification} satisfies $u'=v$.
\end{theorem}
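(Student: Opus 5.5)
The plan is to first recast the stopping problem \eqref{eq:OSP} as a problem with zero terminal reward, and then use the averaging representation through the infimum $I$. Since $c$ lies in the domain of the generator and $r=\delta^{-1}(\delta-\mathcal{L})c$, Dynkin's formula, justified by the exponential bounds of Assumption~\ref{A:Assumptions2}(iii) together with Assumption~\ref{A:AssumptionsProcess}, gives
\[
\E_x\bigl(c(X_\tau)e^{-\delta\tau}\bigr)=c(x)-\E_x\int_0^\tau e^{-\delta s}\,\delta r(X_s)\,ds .
\]
Hence
\[
v(x)=c(x)+\inf_\tau \E_x\int_0^\tau e^{-\delta s}g(X_s)\,ds,\qquad g:=f'-\delta r .
\]
I would take $g=f'-\delta r$ with the normalization matching $Q$; if the paper's convention makes $\delta^{-1}\E_x g(X_{e_\delta})$-type quantities appear as $\E_x(f'-r)(I)$, I adjust constants accordingly.

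The stopping problem then has an integral payoff. I would use the Wiener--Hopf factorization $X_{e_\delta}\stackrel{d}{=}S+I'$, with $S$ and $I'$ independent and $I'\sim I$, in the manner of \cite{MOR,SURYA}. This yields, for the hitting time $\tau_b=\inf\{t:X_t\ge b\}$,
\[
\E_x\int_0^{\tau_b}e^{-\delta s}g(X_s)\,ds=\delta^{-1}\E_x\bigl[Q(x+S)\mathbf 1_{\{x+S<b\}}\bigr],
\]
up to the constant normalization, where $Q(y)=\E_y g(I)$. The argument runs through $\E_x\int_0^{\infty}e^{-\delta s}g\,ds=\delta^{-1}\E_x g(X_{e_\delta})$ combined with the strong Markov property at $\tau_b$.

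Assumption~\ref{A:Assumptions4} then gives optimality of $\tau^\ast$. The candidate $V(x)=\delta^{-1}\E_x[Q(x+S)\mathbf 1_{\{x+S<x^\ast\}}]$ is $\le 0$ because $Q\le0$ below $x^\ast$, and it vanishes for $x\ge x^\ast$. The remaining step is to show that $e^{-\delta t}V(X_t)+\int_0^te^{-\delta s}g(X_s)ds$ is a submartingale. The standard averaging verification does this: $V$ is bounded below by the full-horizon quantity $\delta^{-1}\E_x Q(x+S)$ restricted appropriately, and monotonicity of $Q$ on $[x^\ast,\infty)$ shows that for any stopping time the "continuation beyond $x^\ast$" contributes nonnegatively. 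That establishes \eqref{eq:OSPstar}.

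Next I would set $u(x)=u_0+\int_{x^\ast}^x v(y)\,dy$, with $u_0$ chosen so that $\mathcal{L}u-\delta u+f=0$ at $x^\ast$, and check the conditions of Theorem~\ref{T:Verification}. The inequality $u'=v\le c$ holds because stopping immediately is admissible, and equality holds on $[x^\ast,\infty)$ since $\tau^\ast=0$ there. For the HJB equality on $(-\infty,x^\ast)$, I would differentiate: on the continuation region $(\mathcal{L}-\delta)v+f'=0$, so $(\mathcal{L}-\delta)u+f$ has zero derivative there. This is rigorous using $\mathcal{L}(u')=(\mathcal{L}u)'$ (translation invariance of $X$), which makes $(\mathcal{L}-\delta)u+f$ constant on $(-\infty,x^\ast)$, hence zero by the choice of $u_0$; the condition at $-\infty$, via Assumption~\ref{A:Assumptions2}(ii)(a)/(b), pins the constant consistently. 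For $x\ge x^\ast$, the derivative of $(\mathcal{L}-\delta)u+f$ is $(\mathcal{L}-\delta)c+f'=\delta(f'/\delta\cdot\delta - r)$-type expression. Its nonnegativity on $[x^\ast,\infty)$, and hence the sign of $(\mathcal{L}-\delta)u+f\ge0$, follows from monotonicity of $Q$ via the averaging identity. The growth bound (ii) follows from the exponential bounds on $f',c$ and on $\E e^{\pm\theta S}$, $\E e^{\pm\theta I}$.

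\textbf{Main obstacle.} The hardest part will be regularity: $u\in C^1$ (bounded variation case) or $C^2$ (unbounded variation case), i.e.\ smooth fit of $v$ at $x^\ast$ when $X$ has unbounded variation. This is where Assumption~\ref{A:Assumptions3Unboundedvariation} enters. I would differentiate the averaging representation $V'(x)$, which involves $Q'$ and the density behaviour of $S$ at $0$. For unbounded variation, $0$ is regular for $(0,\infty)$, so $\P(S=0)=0$, which removes the jump term $Q(x^\ast)\P(x+S\in dx^\ast)$; together with $Q(x^\ast)=0$ this gives continuity of $v'$ at $x^\ast$. The second delicate point is justifying the sign of $(\mathcal{L}-\delta)u+f$ on $[x^\ast,\infty)$, including the jumps of $X$ that carry the process back into the continuation region.
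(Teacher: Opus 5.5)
Your architecture matches the paper's. You solve the OSP by the averaging/Wiener--Hopf representation: your $V=\delta^{-1}\E[Q(x+S)\indicator{x+S<x^\ast}]$ is exactly $v-c$ in the paper's form $v=H-\sup_\tau\E_x(H-c)(X_\tau)e^{-\delta\tau}$, and your submartingale sketch is the argument packaged in Surya's Theorem 4.2, which the paper invokes. You then get $C^1$-fit from $\P(S=0)=0$, normalize the primitive at $x^\ast$, and show that $\mathcal{L}u-\delta u+f$ is constant below $x^\ast$ and non-decreasing above (the paper delegates this to the Dynkin-game lemma it cites).

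\textbf{Normalization.} Keep your $g=f'-\delta r$ and do not ``adjust constants''. $\E_y g(I)$ and $\E_y(f'-r)(I)$ are not proportional, and their roots differ when $\delta\neq1$. From $H=\delta^{-1}\E_x f'(X_{e_\delta})$ and $c=\E_x r(X_{e_\delta})$ one gets $H-c=\delta^{-1}\E_x\widetilde Q(S)$ with $\widetilde Q(y)=\E_y(f'-\delta r)(I)$. So the identity $H-c=\delta^{-1}\E_xQ(S)$ in the proof of Lemma~\ref{L:optimalstopping} needs $r=(\delta-\mathcal{L})c$. A check: take $X=\sigma W$, $f(x)=x^2$, $c\equiv c_0$. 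The free-boundary problem gives the barrier $\delta c_0/2+\sigma/\sqrt{2\delta}$, which is the root of $\widetilde Q$, whereas $Q$ vanishes at $c_0/2+\sigma/\sqrt{2\delta}$. Your argument therefore proves the statement with $x^\ast$ defined through $f'-\delta r$.

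\textbf{The genuine gap: the HJB inequality on $(x^\ast,\infty)$.} For $x>x^\ast$, the derivative of $\mathcal{L}u-\delta u+f$ is $\mathcal{L}v(x)-\delta c(x)+f'(x)$, not $(\mathcal{L}-\delta)c(x)+f'(x)$. Since $v=c$ only on $[x^\ast,\infty)$,
\[
\mathcal{L}v(x)=\mathcal{L}c(x)+\int_{(-\infty,\,x^\ast-x)}(v-c)(x+y)\,\Pi(dy)\le\mathcal{L}c(x).
\]
So you need $g(x)+\int_{(-\infty,x^\ast-x)}(v-c)(x+y)\,\Pi(dy)\ge0$. Even $g\ge0$ there would not suffice, and monotonicity of $Q$ does not give this directly.

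A second problem: both ``$(\mathcal{L}-\delta)v+f'=0$ on the continuation region'' and $\mathcal{L}(u')=(\mathcal{L}u)'$ presuppose that $v$ is in the domain of $\mathcal{L}$. You only know $v$ is continuous (bounded variation) or $C^1$ (unbounded variation).

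Both issues disappear if you run the translation argument in integrated form and use the dynamic-programming property of the OSP rather than $Q$. Stopping at $t+\tau^\ast\circ\theta_t$ gives, for all $y$ and $t$,
\[
v(y)\le\E_y\Bigl(\int_0^t e^{-\delta s}f'(X_s)\,ds+e^{-\delta t}v(X_t)\Bigr),
\]
with equality for $y<x^\ast$ when $t$ is replaced by $t\wedge\tau^\ast$ (strong Markov property and optimality of $\tau^\ast$). Since
\[
\E_x\bigl[u(X_t+h)-u(X_t)\bigr]-\bigl(u(x+h)-u(x)\bigr)=\int_0^h\bigl(\E_{x+s}v(X_t)-v(x+s)\bigr)\,ds,
\]
divide by $t$ and let $t\downarrow0$. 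This uses Proposition~\ref{P:INFINITESIMALGENERATOR} for $u$, the exponential bounds, and, for the equality case, $\P_y(\tau^\ast\le t)=O(t)$ uniformly for $y$ bounded away from $x^\ast$. You obtain
\[
(\mathcal{L}u-\delta u+f)(x+h)\ge(\mathcal{L}u-\delta u+f)(x)\qquad\text{for all } x\in\mathbb{R},\ h>0,
\]
with equality whenever $x+h<x^\ast$. This requires only the regularity you already have. It is also where the monotonicity of $Q$ genuinely enters, namely through the optimality of $\tau^\ast$.
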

\section{Proofs of the main results}\label{S:Verification}
\subsection{Verification Theorem}
In order to prove Theorem~\ref{T:Verification}, we first establish some preliminary results showing that the problem is nontrivial, i.e., that $G_{\delta}(x)<\infty$ for all $x \in \mathbb{R}$.
\begin{proposition}\label{P:INFINITESIMALGENERATOR}
\begin{itemize}
    \item[$(\rm{i})$]
Suppose $X$ has unbounded variation, and let $u \in C^2(\mathbb{R})$ satisfy
 $$\max \left( \vert u(x) \vert , \vert u'(x) \vert , \vert u''(x)\vert \right) \leq K(1+\cosh (\theta x)).$$
Then $u$ belongs to the domain of the infinitesimal generator, $\mathcal{L} u$ is continuous, and
    $$ 
\mathcal{L} u(x)
  = \mu u'(x)
  + \frac{\sigma^2}{2} u''(x)
  + \int_{\mathbb{R}}
     \big(
       u(x + y) - u(x) - u'(x) y
     \big)\, \Pi(dy).
 $$
\item[$(\rm{ii})$]
Suppose $X$ has bounded variation, and let $u \in C^1(\mathbb{R})$ satisfy
$$ 
\max( \vert u(x) \vert , \vert u'(x) \vert )  \leq K(1+\cosh (\theta x)).
$$
Then $u$ belongs to the domain of the infinitesimal generator, $\mathcal{L} u$ is continuous, and
$$ 
\mathcal{L} u(x)
  = \mu u'(x)
  + \int_{\mathbb{R}}
     \big(
       u(x + y) - u(x)
     \big)\, \Pi(dy).
 $$

\end{itemize}
\end{proposition}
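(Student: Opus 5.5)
We will prove the generator formula by Itô's formula together with a dominated convergence argument. The exponential growth bounds are controlled through Assumption~\ref{A:AssumptionsProcess}. We treat case (i) first; case (ii) is the same argument with a first-order Taylor expansion in place of the second-order one.

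\textbf{Step 1: the integral is finite and continuous.} Write
$$A u(x)=\mu u'(x)+\tfrac{\sigma^2}{2}u''(x)+\int_{\mathbb{R}}\bigl(u(x+y)-u(x)-u'(x)y\bigr)\Pi(dy).$$
We first check that the integral converges locally uniformly in $x$ and is continuous.
\begin{itemize}
\item[(a)] For $|y|\le 1$, Taylor's theorem bounds the integrand by $\tfrac12 y^2\sup_{|z-x|\le1}|u''(z)|$, which is integrable against $\Pi$ on $\{|y|\le1\}$.
\item[(b)] For $|y|>1$, the growth bound gives $|u(x+y)|\le K(1+\cosh(\theta x)\cosh(\theta y)+\sinh(\theta|x|)\sinh(\theta|y|))\le C_x(1+e^{\theta|y|})$, with $C_x$ locally bounded in $x$. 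Also $|u'(x)y|\le C_x|y|$. Both $e^{\pm\theta y}$ and $|y|$ are integrable against $\Pi$ on $\{|y|>1\}$ by Assumption~\ref{A:AssumptionsProcess} and the finite-mean assumption.
\end{itemize}
Continuity of $u,u',u''$ and dominated convergence then give continuity of $Au$.

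\textbf{Step 2: Itô's formula.} Apply Itô's formula to $u(X_t)$ with the decomposition $X_t=x+\mu t+\sigma W_t+\int y\,\tilde N$. This gives
$$u(X_t)-u(x)=\int_0^t Au(X_s)\,ds+M_t,$$
where
$$M_t=\int_0^t\sigma u'(X_{s})\,dW_s+\int_{[0,t]\times\mathbb{R}}\bigl(u(X_{s^-}+y)-u(X_{s^-})\bigr)\tilde N(ds,dy).$$
We need $\E_x M_t=0$. Here the bound $\cosh(\theta X_s)\le e^{\theta|X_s|}$ together with $\E_x\sup_{s\le t}e^{\pm\theta X_s}<\infty$ is used. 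The latter follows from Doob's inequality applied to the martingales $e^{\pm\theta X_s-s\phi(\pm\theta)}$; strictly, Doob's $L^1$ bound needs an $L\log L$ estimate, so we use exponent $\theta'<\theta$ slightly smaller and $L^2$ Doob if necessary. Alternatively, we localize with $\tau_n=\inf\{s: |X_s|>n\}$ and pass to the limit by dominated convergence, using the estimate from Step 1 along the path. With this control, the stochastic integral against $W$ is a true martingale, since its quadratic variation has finite expectation. The compensated jump integral is a martingale because its integrand is bounded by $C(1+e^{\theta|X_{s^-}|})(|y|\wedge e^{\theta|y|})$-type terms whose $\Pi(dy)\,ds$-integral has finite expectation. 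If needed, we split small jumps, where we use the $L^2$ theory, from large jumps, where we use $L^1$ integrability of $|u(X_{s^-}+y)|+|u(X_{s^-})|$ against $\Pi$.

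\textbf{Step 3: the limit.} Taking expectations,
$$\frac{\E_xu(X_t)-u(x)}{t}=\frac1t\int_0^t\E_x Au(X_s)\,ds.$$
Since $Au$ is continuous with $|Au(z)|\le C(1+\cosh(\theta z))$, right-continuity of paths and dominated convergence, with dominating variable $\sup_{s\le1}\cosh(\theta X_s)$, give $\E_xAu(X_s)\to Au(x)$ as $s\to0$. Hence $\mathcal{L}u(x)=Au(x)$.

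\textbf{Case (ii).} In bounded variation the compensator term $-u'(x)y$ is absorbed into the drift. Itô's formula for finite-variation processes with first-order Taylor bounds $|u(x+y)-u(x)|\le|y|\sup|u'|$ replaces the second-order ones; note that $\int_{|y|\le1}|y|\Pi(dy)<\infty$ here. Here $\mu$ should be interpreted as the drift in the bounded-variation representation, and we will reconcile this with the paper's $\mu=\E X_1$ notation.

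\textbf{Main obstacle.} We expect the main obstacle to be the martingale property of the compensated jump integral under only exponential moments. We will handle it by localization plus the uniform-in-time exponential moment bound on $\sup_{s\le t}|X_s|$, obtained from Doob's inequality at a slightly smaller exponent.
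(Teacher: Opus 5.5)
Your proposal is correct and follows essentially the paper's route: an It\^o/Dynkin identity for $u(X_{t\wedge T_n})$ with $T_n$ the exit time from $(-n,n)$, then removal of the localization by dominated convergence using the $\cosh(\theta x)$ bounds on $u$ and on the operator, and finally $t\to0^+$ by continuity of the operator. Two refinements: the dominated convergence needs $\E_x\sup_{s\le t}e^{\theta|X_s|}<\infty$ at the full exponent $\theta$ (obtainable from $L^p$ Doob applied to $e^{(\theta/p)X_s-s\phi(\theta/p)}$, $p>1$), and the un-localized $L^2$ martingale criteria you mention would need $2\theta$-exponential moments, so rely on the localized version; your remark on case (ii) is also correct, since with $\mu=\E X_1$ as defined in the paper the coefficient of $u'$ there must be $\mu-\int_{\mathbb{R}}y\,\Pi(dy)$, a point the paper's proof does not address.
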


\begin{proof}
First, observe that the operators defined on the right-hand side of the equalities in $(\rm{i})$ and $(\rm{ii})$ are well-defined and continuous. Indeed, this follows from the integrability condition
$$\int_{\vert y \vert\geq 1} \cosh( \theta y) \  \Pi(dy)< \infty.  $$
Let us prove $(\rm{i})$. Define 
    $$ 
\mathcal{L}^\ast u(x)
  = \mu u'(x)
  + \frac{\sigma^2}{2} u''(x)
  + \int_{\mathbb{R}}
     \big(
       u(x + y) - u(x) - u'(x) y
     \big)\, \Pi(dy).
 $$
By \cite[Theorem 31.5]{KIS}, the classical Dynkin formula yields
\begin{equation*}
\E_x u(X_{t \wedge T_n})-u(x) =  \E_x \int_0^{t \wedge T_n} \mathcal{L}^\ast u(X_s)\, ds, 
\end{equation*}
with $T_n= \inf \lbrace t\colon\vert X_t \vert \geq n \rbrace$. Since both $\vert u(x) \vert$ and $\vert \mathcal{L}^\ast u(x) \vert$ are dominated by $K(1+\cosh(\theta x))$, dominated convergence (as $n \to \infty$) gives
\begin{equation*}
\E_x u(X_t) - u(x) = \E_x \int_0^{t} \mathcal{L}^\ast u(X_s)\, ds.
\end{equation*}
Dividing by $t$ and letting $t \to 0^+$, the right-hand side converges to $\mathcal{L}^\ast u(x)$ by continuity of $\mathcal{L}^\ast u$ and \cite[Ch.~I, 1.2.C]{D}, which proves $(\rm{i})$. 
The case of bounded variation follows the same argument, with \cite[Theorem 31.5]{KIS} replaced by \cite[Chapter II, Theorem 31]{ProtterPE}.
\end{proof}
\begin{proposition}\label{P:finitude}
For every $b\in\mathbb{R}$, the reflecting control $D^b$ is admissible, and $J_\delta(x, D^b) < \infty$ for all $x \in \mathbb{R}$. Consequently, $G_{\delta}(x)< \infty$ for all $x \in \mathbb{R}$. 
\end{proposition}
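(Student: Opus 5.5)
The plan is to verify the three requirements of Definition~\ref{D:Admissiblestrategies} for $D^b$ directly, using the explicit structure of the reflecting control. For $b\in\mathbb{R}$, $D^b_t=\max\{\sup_{0\le s\le t}X_s-b,0\}$ is adapted, right-continuous and non-decreasing, and $D^b_{0^-}=0$, so (i) is immediate. For (ii), note that $D^b_t\le (\sup_{s\le t}X_s - b)^+\le |x|+|b|+\sup_{s\le t}|X_s-x|$. Since $X$ has finite mean (indeed exponential moments by Assumption~\ref{A:AssumptionsProcess}), Doob's maximal inequality applied to the martingale part of $X$, together with the linear drift $\mu t$, gives $\E_x\sup_{s\le t}|X_s-x|<\infty$. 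Hence $\E_x D^b_t<\infty$.

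For (iii), I use two structural facts. First, the controlled process satisfies $X^b_t\le \max(b,x\wedge b)= b$ for $t\ge 0$. Second, $X^b_t\ge X_t-(\sup_{s\le t}X_s-b)^+\ge b\wedge \inf_{s\le t}(X_s) - \sup_{s\le t}(X_s-x)^+$, more crudely $|X^b_t|\le |b|+|x|+2\sup_{s\le t}|X_s-x|$. By Assumption~\ref{A:Assumptions2}(iii), $|f|,|c|\le K(1+\cosh(\theta\,\cdot))$. On the set where control acts ($dD^{b,c}$ or jumps of $D^b$), the relevant arguments of $c$ lie in $[X^b_{s^-},\,X^b_{s^-}+\Delta X_s]\cap(-\infty, X_{s}-D^b_{s^-}]$. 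At such times, after the push, the process sits at $b$, and before it at most $X^b_{s^-}+\Delta X_s$, so all arguments lie between $b$ and $X_s^b{}_{-}+\Delta X_s$. For $t>0$ the continuous part of $D^b$ only increases when $X^b=b$, so $|c(X^b_{s^-})|\le\sup_{[b-1,b]}|c|$ there; the $t=0$ jump contributes a finite deterministic term $\int_b^x|c|$. For jumps at $s>0$, the argument of $c$ lies in $[b,\,b+\Delta D_s]$, so the jump term is bounded by $\Delta D_s\,K(1+\cosh(\theta(|b|+\Delta D_s)))$. Consequently the whole control part is dominated by
\begin{equation*}
C_b\,\E_x\int_{0}^{\infty}e^{-\delta s}\,\bigl(1+\cosh(\theta D^b_s)\bigr)\,dD^b_s
\le C_b\,\E_x\int_0^\infty \delta e^{-\delta t}\,\Phi(D^b_t)\,dt,
\end{equation*}
with $\Phi(d)=\int_0^d(1+\cosh\theta y)dy\le C(1+e^{\theta d})$, after integrating by parts in $s$. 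The running cost is bounded by $K\E_x\int_0^\infty e^{-\delta s}(1+\cosh(\theta X^b_s))ds$. Both reduce to finiteness of $\int_0^\infty e^{-\delta t}\E_x e^{\theta\sup_{s\le t}|X_s-x|}\,dt$.

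The main obstacle is this last exponential-moment bound, and for it I would use the supremum and infimum at an exponential time. The quantity equals $\delta^{-1}\E_x e^{\theta\max(S-x,\,x-I)}\le \delta^{-1}(\E e^{\theta S}+\E e^{-\theta I})$. By Assumption~\ref{A:AssumptionsProcess}, $e^{\pm\theta X_t - t\phi(\pm\theta)}$ are positive martingales. Doob's $L^p$ inequality with $p$ slightly above $1$, applied to $\theta'<\theta$, fails at $p=1$, so instead I would use the stopping argument $\P(S>y)\le \E[e^{\theta X_{e_\delta}}]e^{-\theta y}$-type bounds at first passage. Concretely, optional stopping at $T_y=\inf\{t:X_t>y\}$ gives $\P(T_y<e_\delta)\le e^{-\theta y}\E e^{\theta X_{T_y}-T_y\phi(\theta)}e^{-\delta T_y}\cdots\le e^{-\theta y}$ whenever $\phi(\theta)<\delta$. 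This yields $\P(S>y)\le e^{-\theta y}$, which is borderline at exponent $\theta$. To close the argument I would exploit the strict inequality $\max\phi(\pm\theta)<\delta$ and continuity of $\phi$ to pick $\theta_1>\theta$ with $\phi(\pm\theta_1)<\delta$, if $\theta$ is interior to the domain. Otherwise I would invoke the Wiener--Hopf factorization $\E e^{\theta X_{e_\delta}}=\E e^{\theta S}\E e^{\theta I}$, which gives $\E e^{\theta S}=\delta/(\delta-\phi(\theta))\cdot(\E e^{\theta I})^{-1}<\infty$ since $I\le 0$ and $\E e^{\theta I}>0$; symmetrically for $-I$. With these, $J_\delta(x,D^b)<\infty$ and admissibility (iii) hold, and $G_\delta(x)\le J_\delta(x,D^b)<\infty$.
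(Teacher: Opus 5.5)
Your last ingredient is correct: $\E e^{\theta S}<\infty$ and $\E e^{-\theta I}<\infty$ follow from $\E e^{\theta X_{e_\delta}}=\E e^{\theta S}\,\E e^{\theta(X_{e_\delta}-S)}$ by independence. The problem is that the two reductions feeding into it lose too much. As written, they produce bounds that can be infinite under Assumption~\ref{A:AssumptionsProcess}.

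\textbf{Jump part of the control cost.} Bounding $\Delta D_s\bigl|\int_0^1 c(\cdots)\,d\lambda\bigr|$ by $\Delta D_s\,K\bigl(1+\cosh(\theta(|b|+\Delta D_s))\bigr)$ introduces an extra factor $\Delta D_s$. By the compensation formula, the expectation of the resulting sum is then comparable to $\int_{(1,\infty)}y\,e^{\theta y}\,\Pi(dy)$, which need not be finite. For example, take $\Pi(dy)=y^{-3/2}e^{-\theta y}\,dy$ on $(1,\infty)$ and $\delta$ large: then $\int_{(1,\infty)}e^{\theta y}\,\Pi(dy)<\infty$ while $\int_{(1,\infty)}y\,e^{\theta y}\,\Pi(dy)=\infty$.

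Your next step, $\int_0^\infty e^{-\delta s}g(D_s)\,dD_s\le\delta\int_0^\infty e^{-\delta t}\Phi(D_t)\,dt$ with $g=1+\cosh(\theta\,\cdot)=\Phi'$, is false at jumps. Since $g$ is increasing and evaluated at the post-jump level, $g(D_s)\Delta D_s\ge\Phi(D_s)-\Phi(D_{s^-})$, which is the wrong direction.

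To repair this, keep the exact integral. For $s>0$ the jump term is at most $\int_b^{b+\Delta D_s}|c(y)|\,dy\le Ke^{\theta|b|}\Phi(\Delta D_s)\le Ke^{\theta|b|}\bigl(\Phi(D_s)-\Phi(D_{s^-})\bigr)$, using superadditivity of the convex $\Phi$ with $\Phi(0)=0$. After that your integration by parts is legitimate and leads to $\E e^{\theta S}$. The paper does the analogous thing by integrating the exponential exactly, $\Delta D_s\int_0^1e^{\theta(\cdots)}\,d\lambda\le\theta^{-1}e^{\theta(b+\Delta X_s)}$, and then applying the compensation formula with $\int_{(1,\infty)}e^{\theta y}\,\Pi(dy)<\infty$.

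\textbf{Running cost.} From $|X^b_t|\le|b|+|x|+2\sup_{s\le t}|X_s-x|$ you only get $\cosh(\theta X^b_t)\le C\,e^{2\theta\sup_{s\le t}|X_s-x|}$. So this reduction needs $\E e^{2\theta S}$ and $\E e^{-2\theta I}$, not the exponent $\theta$ you state. In the example above, $\E e^{2\theta S}\ge\E e^{2\theta X_{e_\delta}}=\infty$. Your intermediate bound $X^b_t\ge b\wedge\inf_{s\le t}X_s-\sup_{s\le t}(X_s-x)^+$ also fails when $x>b$.

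The missing idea is the one-sided structure of the reflected process. We have $X^b_t\le b$ and $b-X^b_t\le(b-x)^++\bigl(\sup_{s\le t}X_s-X_t\bigr)$. By the Wiener--Hopf factorization, the drawdown $\sup_{s\le e_\delta}X_s-X_{e_\delta}$ has the law of $-I$. Hence $\E_x\int_0^\infty e^{-\delta t}e^{-\theta X^b_t}\,dt\le\delta^{-1}e^{-\theta b}e^{\theta(b-x)^+}\E e^{-\theta I}<\infty$.

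With these two repairs your argument would be a self-contained alternative to the paper's proof. The paper instead obtains the running cost and the continuous part of $D^b$ from \cite[A.2]{NOBAYAMAZAKI}, after checking \eqref{eq:finitudeyamazaki} under exponential growth. As written, however, your claim that everything reduces to a $\theta$-exponential moment of $\sup_{s\le t}|X_s-x|$ is not established.
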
  
\begin{proof}
The last statement is a corollary of the previous ones. 
We will show that
\begin{align}
   &\E_x\left(\int_{0^+}^{\infty}e^{-\delta s}\left(\vert f(X_{s^-}^{b})\vert ds +\vert c (X^{b}_{s^-})\vert d(D^b)^c_s\right)  \right)<\infty, \label{eq:finitude1} \\   
   & \E_x  \sum_{0 < s < \infty}e^{-\delta s}   \Delta D^b_s \left\vert \int_0^{1} c \left(X^{b}_{s^-} + \Delta X_s -\lambda \Delta D^b_s \right) d\lambda  \right\vert <\infty.\label{eq:finitude2}
\end{align}   
We first study \eqref{eq:finitude1}. The term 
$$ 
\E_x   \int_{0^+}^{\infty}e^{-\delta s}   \vert c \left(X^{b}_{s^-}\right)\vert d(D^b)^c_s
=\E_x   \int_{0^+}^{\infty}e^{-\delta s}   \vert c \left(b\right)\vert d(D^b)^c_s
$$
is finite by \cite[A.2(\rm{ii})]{NOBAYAMAZAKI}, since the continuous part of $D^b$ increases only at the boundary $b$.
For the remaining term, we cannot apply \cite[A.2(\rm{i})]{NOBAYAMAZAKI} directly, since that result assumes $\vert f \vert$ is dominated by a polynomial. However, that hypothesis is only used to establish the inequality.
\begin{equation}\label{eq:finitudeyamazaki}
 \int_{(-\infty,-1)}\sup_{z \in [-1,0]} \left(\E_{z+y}\int_0^\infty e^{-\delta t}\vert f(X_t)\vert dt \right) \Pi(dy)   <\infty,
\end{equation}
which we now verify under our weaker exponential growth hypothesis.
For $z \in [-1,0]$, by Assumption \ref{A:Assumptions2}(\rm{iii}) together with Assumption \ref{A:AssumptionsProcess},
\begin{multline*}
 \E_{z+y}\int_0^{\infty}e^{-\delta t}\vert f(X_t)\vert dt \leq K\E \int_0^\infty \frac{e^{-\delta t}}{2}(e^{\theta (X_t+z+y)}+e^{-\theta (X_t+z+y)}+2) dt \\
 \leq K e^{\theta \vert  y \vert} \E \int_0^{\infty} \frac{e^{-\delta t}}{2}(e^{\theta X_t}+e^{-\theta( X_t-1)}+2) dt+K\delta^{-1}=\hat{K} e^{\theta \vert y \vert}+K\delta^{-1},
\end{multline*}
for some constant $\hat{K}$. Therefore the left-hand side of \eqref{eq:finitudeyamazaki} is bounded above by
$$\int_{(-\infty,-1)}e^{\theta \vert y  \vert} ( \hat{K}+K\delta^{-1})  \Pi(dy) <\infty,$$
where finiteness follows from Assumption \ref{A:AssumptionsProcess}. This establishes \eqref{eq:finitude1}.

Now we study \eqref{eq:finitude2}. We split the sum according to the size of $\Delta X_s$. For the small jumps,
\begin{multline*}
 \E_x  \sum_{0<s<\infty}e^{-\delta s}\Delta D_s^b \indicator{0 \leq \Delta X_s \leq 1}  \int_0^1 \vert c\vert (X^b_{s^-}+\Delta X_s - \lambda \Delta D^b_s) d\lambda \\ 
 \leq  \E_x  \sum_{0<s<\infty}e^{-\delta s}\Delta D_s^b  \sup_{z \in [0,1]}\vert c(z+b) \vert ,
\end{multline*}
which is finite by \cite[A.2(\rm{i})]{NOBAYAMAZAKI}. 
It remains to bound
\begin{equation}\label{eq:finitude3}
    \E_x  \sum_{0<s<\infty}e^{-\delta s}\Delta D_s^b  \indicator{\Delta X_s > 1}  \int_0^1 \vert c \vert (X^b_{s^-}+\Delta X_s - \lambda \Delta D^b_s) d\lambda.
\end{equation}
Using $\vert c(x)\vert \leq K(1 + \cosh(\theta x))$ and noting that $X^b_{s^-} + \Delta X_s - \lambda \Delta D^b_s \geq b$ for $\lambda \in [0,1]$ (so that $\cosh$ is dominated by an exponential up to a constant depending on $b, \theta$), there exists $\overline{K}>0$ such that \eqref{eq:finitude3} is bounded above by
\begin{multline*}
     \E_x  \sum_{0<s<\infty}e^{-\delta s}\Delta D_s^b  \indicator{\Delta X_s > 1}  \int_0^1 \overline{K}\left(1+ \exp( \theta ( X^b_{s^-}+\Delta X_s - \lambda \Delta D^b_s) )\right)d\lambda \\
     = \E_x  \sum_{0<s<\infty}e^{-\delta s}  \indicator{\Delta X_s > 1}  \overline{K}\left( \Delta D_s^b +\theta^{-1}( e^{ \theta ( X^b_{s^-}+\Delta X_s )}-e^{\theta X_s^b})\right)\\ 
     \leq \E_x  \sum_{0<s<\infty}e^{-\delta s} \indicator{\Delta X_s > 1}  \overline{K}\left( \Delta D_s^b +\theta^{-1} e^{ \theta ( b+\Delta X_s )}\right).
\end{multline*} 
Since $\Delta D_s^b \leq \Delta X_s$ on $\{\Delta X_s \geq 1\}$ 
(a jump of $D^b$ cannot exceed the corresponding jump of $X$),  
\eqref{eq:finitude3} is bounded above by
\begin{equation*}
 \E_x  \sum_{0<s<\infty}e^{-\delta s} \indicator{\Delta X_s > 1}  \overline{K}\left(\Delta X_s + \theta^{-1}e^{ \theta ( b+\Delta X_s )}\right).
    \end{equation*}
By the compensation formula for the jump measure of $X$, this equals
\begin{equation*}
       \int_0^{\infty}  \int_{(1,\infty)}e^{-\delta s}  \overline{K}  (y+\theta^{-1}e^{\theta (b+y)}  )  \, \Pi(dy)\, ds =  \frac{\overline{K}e^{\theta b}}{\delta}\int_{(1,\infty)} (e^{-\theta b}y+\theta^{-1}e^{\theta y})\,\Pi (dy).
\end{equation*}
The right-hand side is finite by Assumption \ref{A:AssumptionsProcess}, since $\int_{(1,\infty)} e^{\theta y} \Pi(dy) < \infty$ whenever $\theta$ lies in the domain of $\phi$, and $\int_{(1,\infty)} y\,\Pi(dy) < \infty$ by the finite-mean assumption. 
This shows \eqref{eq:finitude3} is finite, completing the proof.
\end{proof}
In order to establish Theorem~\ref{T:Verification}, we first prove the following two lemmas.
\begin{lemma}\label{L:inequality}
    Under the hypotheses of Theorem~\ref{T:Verification}, there exists an increasing localizing sequence of stopping times $\lbrace T_n \rbrace_{n \in \mathbb{N}}$ such that
$$u(x) \leq J_{\delta}(x,D)+\liminf_{n \to \infty}\limsup_{t \to \infty}\E_x  e^{-\delta (t\wedge T_n)}u(X_{t\wedge T_n}^D)   $$
for every $D \in \mathcal{A}$. In particular, 
\begin{equation}\label{eq:bound0}
    u(x) \leq J_{\delta}(x,0)=  \delta^{-1}\E_x f(X_{e_{\delta}}), 
    \end{equation}
where $0$ denotes the null control $D \equiv 0$.
\end{lemma}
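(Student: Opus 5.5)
We apply Itô's formula (the change-of-variables formula for semimartingales) to $e^{-\delta t}u(X^D_t)$, where $X^D=X-D$ is a semimartingale. By hypothesis (i), $u$ is $C^2$ in the unbounded variation case and $C^1$ in the bounded variation case; in the latter case the Meyer--Itô formula for finite variation processes, or \cite[Chapter II, Theorem 31]{ProtterPE}, suffices. We choose the localizing sequence
\[
T_n=\inf\{t\ge 0\colon |X^D_t|\ge n\}\wedge n,
\]
intersected with a localizing sequence for the local martingale parts if needed, so that all stochastic integrals stopped at $T_n$ are true martingales. This is possible because on $[0,T_n)$ the process $X^D$ is bounded, $u'$ is bounded on compacts, and the compensated jump integral has integrable compensator thanks to $\int_{|y|\ge1}\cosh(\theta y)\Pi(dy)<\infty$ and the growth bound (ii).

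\textbf{Decomposition.} Writing $X^D=X-D$ and separating the jumps of $X$ from those of $D$, Itô's formula gives
\begin{align*}
e^{-\delta(t\wedge T_n)}u(X^D_{t\wedge T_n})-u(x)={}&\int_{0}^{t\wedge T_n}e^{-\delta s}(\mathcal{L}-\delta)u(X^D_{s^-})\,ds-\int_{0}^{t\wedge T_n}e^{-\delta s}u'(X^D_{s^-})\,dD^c_s\\
&-\sum_{0^-\le s\le t\wedge T_n}e^{-\delta s}\bigl(u(X^D_{s^-}+\Delta X_s)-u(X^D_{s^-}+\Delta X_s-\Delta D_s)\bigr)+M_{t\wedge T_n}.
\end{align*}
Here $M$ is a local martingale, and the drift/jump-compensator terms assemble into $\mathcal{L}u$ via Proposition~\ref{P:INFINITESIMALGENERATOR}. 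The jump at $s=0$ is included because $X_{0^-}=x$ and $D_0=d_0$.

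\textbf{Applying the HJB inequalities.} The difference in each jump term equals
\[
\Delta D_s\int_0^1 u'\bigl(X^D_{s^-}+\Delta X_s-\lambda\Delta D_s\bigr)\,d\lambda .
\]
Using $u'\le c$ there and in the $dD^c$ integral, together with $(\mathcal{L}-\delta)u\ge -f$, we obtain after taking expectations (so that $\E_x M_{t\wedge T_n}=0$)
\[
u(x)\le \E_x\Bigl[\int_{0^+}^{t\wedge T_n}e^{-\delta s}f(X^D_{s^-})\,ds+\text{(control cost up to } t\wedge T_n)\Bigr]+\E_x e^{-\delta(t\wedge T_n)}u(X^D_{t\wedge T_n}).
\]
Next we let $t\to\infty$ and then $n\to\infty$. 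The cost terms converge to $J_\delta(x,D)$ by dominated convergence, using the admissibility condition (iii) of Definition~\ref{D:Admissiblestrategies}; the integrands are dominated by their absolute values, which are integrable over $[0,\infty)$. Taking $\limsup_t$ and then $\liminf_n$ in the remainder term yields the first claim.

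\textbf{The null control.} For $D\equiv0$, $J_\delta(x,0)=\E_x\int_0^\infty e^{-\delta s}f(X_s)\,ds=\delta^{-1}\E_x f(X_{e_\delta})$ by independence of $e_\delta$ and Fubini; finiteness follows from \eqref{eq:assumptions}. It remains to show that the remainder vanishes. By (ii),
\[
e^{-\delta t}|u(X_t)|\le K e^{-\delta t}\bigl(1+\cosh(\theta\sup_{s\le t}|X_s|)\bigr).
\]
Rather than using this supremum bound directly, we take $T_n$ to be deterministic times ($T_n=n$ suffices in the null-control case), so that $t\wedge T_n=t$ for large $n$. Then
\[
\E_x e^{-\delta t}|u(X_t)|\le Ke^{-\delta t}\Bigl(1+\tfrac12 e^{\theta x}e^{t\phi(\theta)}+\tfrac12 e^{-\theta x}e^{t\phi(-\theta)}\Bigr)\to0
\]
by Assumption~\ref{A:AssumptionsProcess}.

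\textbf{Main obstacle.} The delicate point is the order of the limits $\limsup_{t}$ and then $\liminf_n$ with a random $T_n$. Since $\E_x e^{-\delta(t\wedge T_n)}u(X_{t\wedge T_n})\to \E_x e^{-\delta T_n}u(X_{T_n})$ as $t\to\infty$, we must show that this quantity tends to $0$. To do so, we use that $e^{-\delta t}\cosh(\theta X_t)$ is a positive supermartingale (since $\phi(\pm\theta)<\delta$), and optional stopping then gives
\[
\E_x e^{-\delta T_n}\cosh(\theta X_{T_n})\indicator{T_n<\infty}\le \E_x e^{-\delta T_n}\cosh(\theta X_{T_n}) .
\]
We then either choose $T_n$ so that this bound tends to zero, or, more simply, use deterministic truncation in the definition of $T_n$ to reduce to the computation above.
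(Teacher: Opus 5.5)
Your proof of the main inequality follows the paper: It\^o's formula for $e^{-\delta t}u(X^D_t)$ under a spatial/martingale localization, the fundamental-theorem-of-calculus rewriting of the jump terms, the HJB inequalities $u'\le c$ and $(\mathcal{L}-\delta)u\ge -f$, expectations, and then $\limsup_t$ followed by $\liminf_n$. Folding the time-zero jump $u(x)-u(x-d_0)$ into the jump sum at $s=0$ is equivalent to the paper's separate lower bound on $u(x-d_0)$. Your dominated-convergence justification of the cost terms via Definition~\ref{D:Admissiblestrategies}(iii) is more explicit than the paper's.

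The proposal does not close the null-control remainder, i.e.\ it does not show $\liminf_n\limsup_t\E_x e^{-\delta(t\wedge T_n)}u(X_{t\wedge T_n})\le 0$ when $T_n$ is random.

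\textbf{Why your two resolutions fail.} First, your displayed optional-stopping inequality is trivially true, since the indicator is at most one, and it gives no decay. Second, taking $T_n=n$ is not justified. You yourself need $T_n$ to make the stopped stochastic integrals true martingales. Without spatial localization this requires growth control of $u'$ and of the jump integrand. Hypothesis (ii) of Theorem~\ref{T:Verification} only bounds $|u|$, and $u'\le c$ is one-sided. Capping a spatial $T_n$ by $n$ makes it bounded, not deterministic, so it does not reduce to your fixed-$t$ computation either. The paper itself only proves $\limsup_t\E_x e^{-\delta t}|u(X_t)|=0$ and leaves this passage implicit, so you have spotted a real point it glosses over.

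\textbf{A short fix, valid for any localizing $T_n\uparrow\infty$.}
\begin{enumerate}
\item Let $Z_t=e^{-\delta t}\cosh(\theta X_t)$. This is a nonnegative supermartingale with $Z_t\to0$ a.s.\ and in $L^1$.
\item For stopping times $T\le m$ you have $e^{-\delta T}e^{\pm\theta X_T}\le e^{\pm\theta X_T-T\phi(\pm\theta)}=\E_x[e^{\pm\theta X_m-m\phi(\pm\theta)}\mid\mathcal{F}_T]$. Hence $\{Z_T\colon T\le m\}$ is uniformly integrable, and $\E_x[Z_{T_n};T_n<m]\to0$ as $n\to\infty$.
\item Optional sampling at $T_n\vee m\ge m$ gives $\E_x[Z_{T_n};m\le T_n<\infty]\le\E_x Z_m$.
\item Combining the last two steps, $\limsup_n\E_x[Z_{T_n};T_n<\infty]\le\E_x Z_m$, which tends to $0$ as $m\to\infty$.
\item Use $Z_{t\wedge T_n}\le Z_{T_n}\indicator{T_n\le t}+Z_t$ and $e^{-\delta s}|u(X_s)|\le Ke^{-\delta s}+KZ_s$. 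These give $\lim_n\limsup_t\E_x e^{-\delta(t\wedge T_n)}|u(X_{t\wedge T_n})|=0$.
\end{enumerate}
Alternatively, the same uniform integrability lets you send $n\to\infty$ first at fixed $t$, and then apply your bound on $\E_x e^{-\delta t}|u(X_t)|$.
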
    

\begin{proof}
First, assume $X$ has unbounded variation and $u \in C^2(\mathbb{R})$.

Define the local martingales $\{M^{(i)}_t\}_{t\geq 0}$ ($i=1,2$) by
\begin{align*}
    M^{(1)}_t&=\sigma W_t+\int_{(0,t]\times \R}y\,\tilde N(ds,dy),\\
    M^{(2)}_t&=\int_{(0,t]\times \R}e^{-\delta s}\left(
    u(X^{D}_{s^-}+y)-u(X^{D}_{s^-})-yu'(X^{D}_{s^-})
    \right)\tilde N(ds,dy).
\end{align*}
Let $T'_n$ be a common localizing sequence for $M^{(1)}$ and $M^{(2)}$, and define
$$
S_n= \inf\lbrace t\geq 0\colon \max( \vert X_t \vert , \vert X^D_{t^-}+ \Delta X_t   \vert , \vert X_t^D  \vert ) \geq n \rbrace.
$$
Setting $T_n = T'_n \wedge S_n$ yields a localizing sequence such that $\max(|X_t|, |X^D_{t^-} + \Delta X_t|, |X^D_t|) \leq n$ for all $t < T_n$.
Applying It\^o's formula to $e^{-\delta t} u(X^D_t)$ on $[0, t \wedge T_n]$, we obtain
\begin{align}
    e^{-\delta (t \wedge T_n)}&u(X^{D}_{t \wedge T_n })-u(x-d_0) \notag \\ &=\int_{0^+}^{t \wedge T_n}e^{-\delta s}u'(X^{D}_{s^-})dX^{D}_s+\frac{\sigma^2}{2}\int_{0^+}^{t \wedge T_n}e^{-\delta s}u''(X^{D}_{s^-})\,ds+M^{(2)}_{t \wedge T_n}\notag\\
    &\quad+\int_{(0,t \wedge T_n]\times \R}e^{-\delta s}\left(
    u(X^{D}_{s^-}+y)-u(X^{D}_{s^-})-yu'(X^{D}_{s^-})
    \right)ds\Pi(dy)\notag\\
    &\quad -\int_{0^+}^{t \wedge T_n}\delta e^{-\delta s} u(X_{s^-}^D)ds \nonumber \\ & +\sum_{0<s \leq t \wedge T_n }\left(u(X^{D}_{s})-u(X^{D}_{s^-}+\Delta X_s)+u'(X_{s^-}^{D})\Delta D_s\right) e^{-\delta s}  \nonumber \\
    &=\int_{0^+}^{t \wedge T_n}e^{-\delta s}\mathcal{L}u(X^{D}_{s^-})\,ds+\int_{0^+}^{t \wedge T_n}e^{-\delta s}u'(X^{D}_{s^-})\,dM^{(1)}_s+M^{(2)}_{t \wedge T_n}\notag\\
    &\quad-\int_{0^+}^{t \wedge T_n}e^{-\delta s}u'(X^{D}_{s^-})\,dD^c_s +\sum_{0<s \leq t \wedge T_n}e^{-\delta s} \left( u(X_s^{D})-u(X_{s^-}^{D}+\Delta X_s) \right) \nonumber \\
    & -\int_{0^+}^{t \wedge T_n}\delta e^{-\delta s} u(X_{s^-}^D)\,ds.\label{useful}
\end{align}
Notice that
$$u(X_s^{D})-u(X_{s^-}^{D}+\Delta X_s) =-\Delta D_s \int_0^{1} u'(X_{s^-}^D + \Delta X_s -\lambda \Delta D_s) d\lambda,$$
by the fundamental theorem of calculus applied along the segment from $X^D_{s^-}+\Delta X_s$ to $X^D_s = X^D_{s^-}+\Delta X_s - \Delta D_s$.
Therefore, by \eqref{useful} and the HJB inequalities 
$\mathcal{L}u - \delta u + f \geq 0$ and $u' \leq c$, 
we obtain 
\begin{align*}
    e^{-\delta(t \wedge T_n)}u(X^{D}_{t \wedge T_n})&-u(x-d_0) \\ 
    &\geq \int_{0^+}^{t \wedge T_n}e^{-\delta s}\left(\delta u-f \right)(X^{D}_{s^-})\,ds\\
    & +M ^{(3)}_{t \wedge T_n}-\int_{0^+}^{t \wedge T_n}e^{-\delta s}c  (X^{D}_{s^-})  \,dD^c_s   \\ 
   & -\sum_{0 <s \leq{t\wedge T_n} }  e^{-\delta s} \Delta D_s \int_0^{1} c (X_{s^-}^D + \Delta X_s -\lambda \Delta D_s) d\lambda \\
    & -\int_{0^+}^{t \wedge T_n}\delta e^{-\delta s}u(X_{s^-}^D)\,ds,
\end{align*}
where 
$M^{(3)}_t:=\int_{0^+}^{t}e^{-\delta s}u'(X^{D}_{s^-})\,dM^{(1)}_{s}+M^{(2)}_{t}$ 
is a local martingale with localizing sequence $\lbrace T_n \rbrace_{n \in \mathbb{N}}$. 
Taking expectations (the local martingale term vanishes since $M^{(3)}_{t \wedge T_n}$ is a true martingale) and rearranging, 
\begin{multline}\label{eq:inequalityverification2}
  \E_x \left(   \int_{0^+}^{t \wedge T_n}e^{-\delta s} f (X^{D}_{s^-})\,ds +\int_{0^+}^{t \wedge T_n}e^{-\delta s} c(X^{D}_{s^-}) \,dD^c_s   \right.  \\
      \left. +\sum_{0 <s \leq{t\wedge T_n} } e^{-\delta s}  \Delta D_s \int_0^{1} c (  X_{s^-}^D + \Delta X_s -\lambda  \Delta D_s)\,  d\lambda \right) \\ 
 + \E_x  e^{-\delta (t \wedge T_n)}  u(X^{D}_{t\wedge T_n })   \geq  u(x-d_0).
\end{multline}
Observe that
$$
u(x - d_0) = u(x) - \int_0^{d_0} u'(x - y)\, dy.
$$
Since $u'(x) \leq c(x)$, 
$$
u(x - d_0) \geq u(x) - \int_0^{d_0} c(x - y)\, dy.
$$
By the change of variables $y = \lambda d_0$,
$$
u(x - d_0) \geq u(x) - d_0 \int_0^1 c(x - \lambda d_0)\, d\lambda.
$$
Substituting this into \eqref{eq:inequalityverification2}, taking $\limsup_{t \to \infty}$ and then $\liminf_{n \to \infty}$ on both sides yields the first claim for the case of unbounded variation.

The case of bounded variation follows similarly, applying the It\^o--Meyer formula for c\`adl\`ag processes of finite variation (see \cite[Chapter II, Theorem 31]{ProtterPE}).
Finally, to prove \eqref{eq:bound0}, we use Assumption~\ref{A:Assumptions2}(iii) and Assumption \ref{A:AssumptionsProcess}:
$$
\limsup_{t \to \infty}\E_x\, e^{-\delta t} \vert u (X_t)\vert \leq \limsup_{t\to \infty}  e^{-\delta t} \E_x K(1+\cosh (\theta X_t)) =0,
$$
where the last equality uses $\E_x \cosh(\theta X_t) \leq \cosh(\theta x) e^{\max(\phi(\theta), \phi(-\theta))\, t}$ together with $\max(\phi(\theta), \phi(-\theta)) < \delta$.
\end{proof}
\begin{lemma}\label{Lemma:hardbound}
    Under the hypotheses of Theorem~\ref{T:Verification}, for every $x \in \mathbb{R}$ and every $D \in \mathcal{A}$, the localizing sequence $\{T_n\}$ from Lemma~\ref{L:inequality} satisfies
\begin{equation*}
\limsup_{n\to \infty}\limsup_{t \to \infty}\E_x e^{-\delta (t\wedge T_n)} u(X_{t\wedge T_n}^D)\leq 0.
\end{equation*}
\end{lemma}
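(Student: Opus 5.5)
The plan is to split $u$ according to the sign of the controlled state. On $\{X^D\ge 0\}$ we use the growth bound (ii) together with $X^D\le X$. On $\{X^D<0\}$ we use the upper bound $u(z)\le \delta^{-1}\E_z f(X_{e_\delta})$ from Lemma~\ref{L:inequality}, combined with Assumption~\ref{A:Assumptions2}(ii). Throughout, write $\tau=\tau_{t,n}:=t\wedge T_n$. Since we only need an upper bound, it suffices to show that $\limsup_n\limsup_t \E_x e^{-\delta\tau}u^+(X^D_\tau)\le 0$.

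\emph{Positive side.} Since $D\ge 0$ we have $X^D_\tau\le X_\tau$. On $\{X^D_\tau\ge 0\}$ this gives $u^+(X^D_\tau)\le K(2+e^{\theta X^D_\tau})\le K(2+e^{\theta X_\tau})$. The constant part contributes at most $2K\,\E_x e^{-\delta\tau}$, which tends to $0$ because $T_n\to\infty$. For the exponential part we write
\[
e^{-\delta\tau}e^{\theta X_\tau}=e^{\theta x}\,\mathcal E_\tau\,e^{-(\delta-\phi(\theta))\tau},\qquad \mathcal E_s:=e^{\theta(X_s-x)-\phi(\theta)s},
\]
where $\mathcal E$ is the Esscher martingale. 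Under the tilted measure $\P^\theta_x$ (Esscher transform), the expectation becomes $e^{\theta x}\E^\theta_x e^{-(\delta-\phi(\theta))\tau}$. This is bounded by $e^{\theta x}$, and it tends to $0$ as $t\to\infty$ and then $n\to\infty$, by dominated convergence, provided that $T_n\to\infty$ $\P^\theta_x$-a.s. The latter holds because $\P^\theta_x$ is locally equivalent to $\P_x$ and $\{T_n\}$ is increasing. Here Assumption~\ref{A:AssumptionsProcess} ($\phi(\theta)<\delta$) is exactly what is needed.

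\emph{Negative side.} By \eqref{eq:bound0} and the strong Markov property at $\tau$, we have
\[
e^{-\delta\tau}u(X^D_\tau)\le \E_x\Big[\int_\tau^\infty e^{-\delta s} f\big(X^D_\tau+X_s-X_\tau\big)\,ds\;\Big|\;\mathcal F_\tau\Big].
\]
Note that $Y_s:=X^D_\tau+X_s-X_\tau\ge X^D_s$ for $s\ge\tau$, since $D$ is non-decreasing. We then treat the two alternatives of Assumption~\ref{A:Assumptions2}(ii) separately.

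\begin{itemize}
\item[(a)] We bound $f^+(y)\le M'+K(2+e^{\theta y})$. Restricted to $\{X^D_\tau<0\}$, this gives a contribution of order $C\,\E_x e^{-\delta\tau}\bigl(1+\E e^{\theta X_{e_\delta}}\bigr)$, which vanishes in the limit exactly as in the positive case.
\item[(b)] Here $f$ is decreasing on $(-\infty,-M]$. On the event $\{Y_s\le -M\}$ we get $f(Y_s)\le f(X^D_s)\le |f(X^D_s)|$. On $\{Y_s>-M\}$ we bound $f(Y_s)$ by $K(2+e^{\theta Y_s})$, and these terms are handled as in the positive side. The remaining term is bounded by $\E_x\int_{\tau}^\infty e^{-\delta s}|f(X^D_{s})|\,ds$. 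This is the tail of an integral that is finite by the admissibility condition (iii) in Definition~\ref{D:Admissiblestrategies}, so it tends to $0$ as $\tau\to\infty$ by dominated convergence.
\end{itemize}

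The main obstacle is case (b). There $f$, and hence possibly $u$, may blow up at $-\infty$, so no growth bound in terms of the uncontrolled process is available. The key step is the pathwise comparison $Y_s\ge X^D_s$ together with the monotonicity of $f$, which reduces the problem to the integrability of the cost of the given admissible control $D$. Some care is needed with the mixed region where $Y_s>-M$ but $X^D_s\le -M$. I would handle it by using $f(Y_s)\le \sup_{[-M,\infty)\cap(-\infty,Y_s]}f\le K(2+e^{\theta Y_s})$, which again reduces to the exponential-moment estimate of the positive side.
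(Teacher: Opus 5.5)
Your proof is correct in substance, but it takes a somewhat different route from the paper. The paper does not split on the current state $X^D_\tau$. It applies the null-control bound of Lemma~\ref{L:inequality} everywhere, so that $\E_x e^{-\delta\tau}u(X^D_\tau)\le \E_x\int_\tau^\infty e^{-\delta s}f(Y_s)\,ds$, and then splits according to the sign of the future position $Y_s$. On $\{Y_s>0\}$ it uses only $X^D_\tau\le X_\tau$. The exponential part therefore becomes the tail $\frac K2\E_x\int_\tau^\infty e^{-\delta s}(2+e^{\theta X_s})\,ds$ of a finite integral, which vanishes by dominated convergence. No estimate of $e^{\theta X}$ at a stopping time and no change of measure is needed.

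Case (b) is handled exactly as you do it: the pathwise comparison $Y_s\ge X^D_s$, monotonicity of $f$, and admissibility condition (iii). What your split buys is a cleaner negative side. On $\{X^D_\tau<0\}$ the exponential terms collapse to a constant times $\E_x e^{-\delta\tau}$ by independence of increments. Cutting $Y_s$ at $-M$ in case (b) also covers the strip $(-M,0]$, which the paper leaves implicit. Your worry about the ``mixed region'' is unnecessary: on $\{Y_s>-M\}$ the growth bound applies to $f(Y_s)$ whatever $X^D_s$ is, with a constant involving $e^{\theta M}$.

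The one step to rephrase is the Esscher argument. The paper's basis is complete, and on such a basis there is in general no probability $\P^\theta_x$ on $\mathcal F_\infty$ whose density is $\mathcal E_t$ on every $\mathcal F_t$. Indeed, the $\P_x$-null event $\{X_t/t\to\phi'(\theta)\}$ lies in $\mathcal F_0$, yet it would have $\P^\theta_x$-probability one. So the statement ``$T_n\to\infty$ $\P^\theta_x$-a.s.'' and dominated convergence under $\P^\theta_x$ are not literally available. Stay at finite horizons instead: for $t\ge N$, optional stopping for $\mathcal E$ gives
\[
\E_x e^{-\delta\tau}e^{\theta X_\tau}\le e^{\theta x}\bigl(\E_x[\mathcal E_N\indicator{T_n\le N}]+e^{-(\delta-\phi(\theta))N}\bigr).
\]
Letting $n\to\infty$ (dominated convergence under $\P_x$) and then $N\to\infty$ gives the limit $0$. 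Alternatively, use the null-control bound on $\{X^D_\tau\ge 0\}$ as well, which reduces this part to the paper's tail-integral argument.
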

\begin{proof}
By Lemma~\ref{L:inequality} applied to the null control,
\begin{multline*}
        \E_x e^{-\delta (t \wedge T_n)} u(X_{t\wedge T_n }^D) \leq \E_x e^{-\delta (t\wedge T_n)}J_\delta(X^D_{t \wedge T_n},0)\\ 
        =\E_x e^{-\delta (t\wedge T_n)} \int_0^{\infty}e^{-\delta s}f(X_{s+t\wedge T_n}-X_{t\wedge T_n}+X_{t\wedge T_n}^D)ds, 
\end{multline*}
where the last equality follows from the strong Markov property: the increment $X_{s+t\wedge T_n}-X_{t\wedge T_n}$ is independent of $\mathcal{F}_{t\wedge T_n}$ and has the same distribution as $X_s$.

We now distinguish the two cases of Assumption~\ref{A:Assumptions2}(ii). 
If (ii)(a) holds, then since $f$ is continuous and $\limsup_{x\to-\infty} f(x) \leq M$, $f$ is bounded above on $(-\infty,0]$ by some constant $M'$. 
Hence
\begin{multline}\label{eq:firstindicatrix}
\E_x e^{-\delta (t\wedge T_n)}\int_0^{\infty}e^{-\delta s} f(X_{s+t\wedge T_n}-X_{t\wedge T_n}+X_{t
\wedge T_n}^D) \indicator{X_{s+t\wedge T_n}-X_{t\wedge T_n}+X_{t\wedge T_n}^D \leq 0} \ ds \\ \leq M'\delta^{-1}\E_x e^{-\delta (t\wedge T_n)}.    
\end{multline}
If instead (ii)(b) holds, i.e., there exists $M>0$ such that $f$ is decreasing on $(-\infty,-M]$, 
we use that
$$
X_{s+t\wedge T_n}-X_{t \wedge T_n}+X^D_{t \wedge T_n} \geq X^D_{s+t \wedge T_n }
$$
(since $D$ is non-decreasing) to obtain, on $\{X_{s+t\wedge T_n}-X_{t\wedge T_n}+X^D_{t\wedge T_n} \leq -M\}$, that $X^D_{s+t\wedge T_n} \leq -M$ and hence $f(X_{s+t\wedge T_n}-X_{t\wedge T_n}+X^D_{t\wedge T_n}) \leq f(X^D_{s+t\wedge T_n})$ by monotonicity. Therefore
\begin{multline}\label{eq:firstindicatrixother}
\E_x e^{-\delta (t\wedge T_n)} \int_0^{\infty}e^{-\delta s} f(X_{s+t\wedge T_n}-X_{t\wedge T_n}+X_{t
\wedge T_n}^D) \indicator{X_{s+t\wedge T_n}-X_{t\wedge T_n}+X_{t\wedge T_n}^D \leq -M} \ ds \\ \leq \E_x \int_{t \wedge T_n}^{\infty} e^{-\delta s}f(X^D_{s}) \indicator{X^D_{s} \leq -M}\, ds.    
 \end{multline}
For the complementary event, in both cases,
\begin{multline}\label{eq:secondindicatrix}
 \E_x  e^{-\delta (t\wedge T_n)}   \int_0^{\infty}  e^{-\delta s}  f(X_{s+t\wedge T_n}-X_{t\wedge T_n}+X_{t\wedge T_n}^D)  \indicator{X_{t\wedge T_n+s}-X_{t \wedge T_n}+X_{t\wedge T_n}^D > 0}  ds  \\
\leq  \E_x  e^{-\delta (t\wedge T_n)}     \int_0^{\infty}e^{-\delta s}  K(1+\cosh(\theta( X_{s+t\wedge T_n}-X_{t\wedge T_n} +X_{t\wedge T_n}^D))) \\
\times \indicator{X_{t\wedge T_n+s}-X_{t \wedge T_n}+X_{t\wedge T_n}^D  > 0}\,  ds  .    
\end{multline}
On $\{X_{s+t\wedge T_n}-X_{t\wedge T_n}+X^D_{t\wedge T_n} > 0\}$, $e^{-\theta(\cdots)} < 1$, so $1 + \cosh(\theta(\cdots)) \leq 2 + e^{\theta(\cdots)}$. Combined with $X^D_{t\wedge T_n} \leq X_{t\wedge T_n}$, the right-hand side of \eqref{eq:secondindicatrix} is bounded above by
\begin{equation}\label{eq:secondindicatrix2}
 \frac{K}{2}\E_x \int_0^{\infty}e^{-\delta (s+t\wedge T_n)} (2+e^{\theta X_{s+t\wedge T_n}}) ds  = \frac{K}{2}\E_x \int_{t \wedge T_n}^{\infty}e^{-\delta u}(2+ e^{\theta X_{u}}) du .  
\end{equation}
%by Assumption \ref{A:AssumptionsProcess}. 
We now combine the bounds. 
As $t \to \infty$ for fixed $n$, $t \wedge T_n \to T_n$, and as $n \to \infty$, $T_n \to \infty$ almost surely. In case (ii)(a), combining \eqref{eq:firstindicatrix} and \eqref{eq:secondindicatrix2}, both right-hand sides tend to zero by dominated convergence (using Assumption~\ref{A:AssumptionsProcess} for the second). In case (ii)(b), combining \eqref{eq:firstindicatrixother} and \eqref{eq:secondindicatrix2}, the same conclusion follows, where the first integral tends to zero by the integrability condition in Definition~\ref{D:Admissiblestrategies}(iii).
\end{proof}
With the two previous lemmas, we have enough tools to prove the verification Theorem~\ref{T:Verification} 
\begin{proof}[Proof of Theorem~\ref{T:Verification}]
Let $D \in \mathcal{A}$. Combining Lemma~\ref{L:inequality} with Lemma~\ref{Lemma:hardbound}, we obtain $u(x) \leq J_\delta(x, D)$. Since $D$ was arbitrary, $u(x) \leq G_\delta(x)$.

It remains to show that the reflecting control $D^{x^\ast}$ achieves equality, that is, $u(x) = J_\delta(x, D^{x^\ast})$. For $D = D^{x^\ast}$, the controlled process satisfies $X^{x^\ast}_t \leq x^\ast$ for all $t \geq 0$, with the continuous part of $D^{x^\ast}$ increasing only at the boundary $x^\ast$ and jumps of $D^{x^\ast}$ occurring only when $X$ jumps above $x^\ast$. Consequently, the HJB inequalities \eqref{eq:TverificationCondition} hold with equality at every relevant point of evaluation, and the inequality \eqref{eq:inequalityverification2} in the proof of Lemma~\ref{L:inequality} becomes an equality:
\begin{multline*}
  \E_x \left(   \int_{0^+}^{t}e^{-\delta s} f (X^{x^\ast}_{s^-})ds +\int_{0^+}^{t}e^{-\delta s} c(X^{x^\ast}_{s^-}) d(D^{x^\ast})^c_s   \right.  \\
      \left. +\sum_{0 <s \leq t} e^{-\delta s}  \Delta D^{x^\ast}_s \int_0^{1} c (  X_{s^-}^{x^\ast} + \Delta X_s -\lambda  \Delta D^{x^\ast}_s)  d\lambda \right) \\ 
    =  u(x-d_0)-\E_x \, e^{-\delta t}  u(X^{x^\ast}_{t}).
\end{multline*}
To conclude, it suffices to show that $\E_x e^{-\delta t} u(X^{x^\ast}_t) \to 0$ along a suitable sequence $t_n \to \infty$. By the same argument used in the proof of Proposition~\ref{P:finitude} for $\E_x \int_{0^+}^\infty e^{-\delta s} \vert f \vert (X^b_s)ds$, 
one obtains
$$\E_x \int_{0^+}^\infty e^{-\delta s} \cosh (\theta X_s^{x^\ast})\,ds< \infty.$$
Since the integrand is non-negative and integrable, 
$$
\liminf_{t \to \infty} e^{-\delta t} \E_x \cosh(\theta X^{x^\ast}_t) = 0,
$$
so there exists a sequence $t_n \to \infty$ with
$$
\lim_{n \to \infty}\E_x  e^{-\delta t_n }  \cosh(\theta X^{x^\ast}_{t_n})=0.
$$
Combined with the growth bound 
$\vert u(x) \vert \leq K(1 + \cosh(\theta x))$, 
this gives 
$$
\lim_{n \to \infty} \E_x e^{-\delta t_n} u(X^{x^\ast}_{t_n}) = 0.
$$
Letting $t = t_n \to \infty$ in the equality above yields $u(x) = J_\delta(x, D^{x^\ast})$, and hence $u(x) = G_\delta(x) = J_\delta(x, D^{x^\ast})$, completing the proof.
\end{proof}
%%%%%%%%%%%%%%%%%%%%%%%%%%%%%%%%%%%%%%%%%%%%%%%%%%
%%%%%%%%%%%%%%%%%%%%%%%%%%%%%%%%%%%%%%%%%%%%%%%%%%
%%%%%%%%%%%%%%%%%%%%%%%%%%%%%%%%%%%%%%%%%%%
\subsection{The associated OSP and proof of Theorem~\ref{T:DISCOUNTEDPROBLEMSOLUTION}}
We study the properties of the associated optimal stopping problem (OSP) in order to prove Theorem~\ref{T:DISCOUNTEDPROBLEMSOLUTION}, the main result of the article. Define
$$
H(x):=\E_x \int_0^{\infty} f'(X_s)e^{-\delta s}ds.
$$
\begin{lemma}\label{L:optimalstopping}
The unique root $x^{\ast}$ of the function $Q$ defined in \eqref{eq:CONDITION}
is the optimal threshold for the optimal stopping problem \eqref{eq:OSP}; that is, 
the stopping time defined in \eqref{eq:taustar} satisfies \eqref{eq:OSPstar}.
\end{lemma}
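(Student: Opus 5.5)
We reduce the stopping problem \eqref{eq:OSP} to one with a pure terminal payoff and then apply the averaging technique of \cite{MOR} and \cite{SURYA}, using the infimum $I$.

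\textbf{Step 1: reduction.} By the definition \eqref{eq:r} of $r$ and Proposition~\ref{P:INFINITESIMALGENERATOR}, we have $(\delta-\mathcal{L})c=\delta r$. Dynkin's formula, localized as in that proposition and extended using the growth bounds of Assumptions~\ref{A:AssumptionsProcess} and~\ref{A:Assumptions2}(iii), gives for every stopping time $\tau$
$$
\E_x\bigl(e^{-\delta\tau}c(X_\tau)\bigr)=c(x)-\E_x\int_0^\tau e^{-\delta s}\,\delta r(X_s)\,ds .
$$
Substituting this into \eqref{eq:OSP} shows that
$$
v(x)=c(x)+\inf_\tau \E_x\int_0^\tau e^{-\delta s}\,g(X_s)\,ds, \qquad g:=f'-\delta r .
$$
We will have to check carefully that the normalization matches \eqref{eq:CONDITION}. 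The formula $\E_x r(X_{e_\delta})=c(x)$ indicates that the natural running cost is $\delta(f'/\delta-r)$ up to constants. In any case $Q$ should be, up to a positive factor, $\E_x\tilde g(I)$ for the relevant $\tilde g$; I will use $\tilde g=f'-r$ as stated and adjust constants if needed.

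Next, write $\E_x\int_0^\tau e^{-\delta s}g(X_s)ds = \tilde H(x)-\E_x\bigl(e^{-\delta\tau}\tilde H(X_\tau)\bigr)$, where $\tilde H(x)=\delta^{-1}\E_x g(X_{e_\delta})$. The problem then becomes maximizing $\E_x e^{-\delta\tau}\tilde H(X_\tau)$.

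\textbf{Step 2: averaging representation.} By the Wiener--Hopf factorization, $X_{e_\delta}\overset{d}{=}S+I'$ with $I'$ an independent copy of $I$. Hence
$$
\tilde H(x)=\delta^{-1}\E_x\bigl(Q(x+S)\bigr)
$$
up to the normalization above, where $Q(y)=\E_y\tilde g(I)$. Since $Q\le 0$ on $(-\infty,x^\ast)$ and $Q$ is non-decreasing on $[x^\ast,\infty)$ with $Q(x^\ast)=0$, the function $Q_+:=Q\indicator{\cdot\ge x^\ast}$ is non-decreasing and non-negative. I would then follow the standard argument of \cite{MOR}, \cite{SURYA} for one-sided stopping via the supremum.

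\textbf{(a) Candidate value.} Set $V(x)=\delta^{-1}\E_x\bigl(Q(S)\indicator{S\ge x^\ast}\bigr)$. By the strong Markov property at $\tau^\ast$ together with the memorylessness of $e_\delta$, we get $V(x)=\E_x\bigl(e^{-\delta\tau^\ast}\tilde H(X_{\tau^\ast})\bigr)$. Equivalently, the cost of stopping at $\tau^\ast$ equals $\delta^{-1}\E_x\bigl(Q(S)\indicator{S<x^\ast}\bigr)$ in the original formulation.

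\textbf{(b) Optimality.} Since $Q_+$ is non-decreasing and non-negative, $V(x)\ge \delta^{-1}\E_x Q_+(S)\ge\tilde H(x)$. Moreover, $V$ is $\delta$-excessive, because $x\mapsto\E_x Q_+(S)$ is the resolvent-type average of a non-decreasing function of the running supremum. The standard majorant argument then shows that $V$ dominates $\E_x e^{-\delta\tau}\tilde H(X_\tau)$ for every $\tau$, and $\tau^\ast$ attains it. Equality $V=\tilde H$ on $[x^\ast,\infty)$ holds since $Q_+=Q$ there.

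\textbf{Step 3: passing to the limit.} Dominated convergence, justified by the $\cosh$ bounds and Assumption~\ref{A:AssumptionsProcess}, allows $\tau=\infty$ and the limits in Dynkin's formula.

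The main obstacle is Step 2(b): excessivity and the majorant property for a payoff that is not monotone and has exponential growth. Uniqueness of the root is also needed, since Assumption~\ref{A:Assumptions4} only gives \emph{a} root. I would handle this by noting that the argument works for any root satisfying the sign and monotonicity conditions, and that two such roots force $Q\equiv0$ between them, which yields the same stopping value.
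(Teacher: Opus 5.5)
Your route is the paper's. Since $c+\tilde H=H$, Step~1 lands exactly on \eqref{eq:OSP3}. Step~2 is the same Wiener--Hopf step followed by the one-sided averaging theorem, which the paper cites as \cite[Theorem 4.2]{SURYA} and you sketch.

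The genuine gap is the normalization you deferred: ``use $\tilde g=f'-r$ as stated and adjust constants if needed'' cannot be carried out, and your own Step~1 computation shows why. From $H(x)=\delta^{-1}\E_x f'(X_{e_\delta})$ and $c(x)=\E_x r(X_{e_\delta})$,
\[
(H-c)(x)=\delta^{-1}\E_x\bigl[(f'-\delta r)(X_{e_\delta})\bigr]=\delta^{-1}\E_x\hat Q(S),\qquad \hat Q(y):=\E_y\bigl(f'-(\delta-\mathcal{L})c\bigr)(I),
\]
whereas $\delta^{-1}\E_x Q(S)=H(x)-\delta^{-1}c(x)$. So $Q$ is the averaging function of the stopping problem with terminal cost $c/\delta$ instead of $c$. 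Only the terminal cost is rescaled, so no positive factor relates $Q$ and $\hat Q$, and their roots differ whenever $\delta\neq1$ and $c\not\equiv0$.

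A concrete check: with $c\equiv c_0$ (so $r\equiv c_0$), the two candidate thresholds solve $\E f'(x+I)=\delta c_0$ and $\E f'(x+I)=c_0$ respectively. Take a pure drift $\mu>0$ and $f(x)=Ax^2$. The problem is then deterministic, the $\tau$-derivative of the cost is $e^{-\delta\tau}(2AX_\tau-\delta c_0)$, and the optimal level is $\delta c_0/(2A)$, which is the root of $\hat Q$. Hence your identity $\tilde H(x)=\delta^{-1}\E\,Q(x+S)$ is false for $\delta\neq1$. What your argument actually proves is the lemma with $Q$ replaced by $\hat Q$, with Assumption~\ref{A:Assumptions4} imposed on $\hat Q$.

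The paper's proof contains the same slip: it asserts $(H-c)(x)=\delta^{-1}\E_x Q(S)$ right after recalling $c(x)=\E_x r(X_{e_\delta})$. So the inconsistency is in the source, and it propagates to Section~4; for example, the argument of the logarithm in \eqref{eq:xstar} should carry a factor $\delta$. You should commit to one normalization. Either keep $r=\delta^{-1}(\delta-\mathcal{L})c$ and use $\hat Q$, or set $r:=(\delta-\mathcal{L})c$, so that $\E_x r(X_{e_\delta})=\delta c(x)$ and $\E_x(f'-r)(I)$ is the right function.

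With that fixed, the rest of your outline closes, and the majorant step needs no excessive-function theory. For $g\ge0$ non-decreasing and any stopping time $\sigma$, memorylessness and the strong Markov property give
\[
\E_x\bigl[e^{-\delta\sigma}\indicator{\sigma<\infty}\E_{X_\sigma}g(S)\bigr]=\E_x\Bigl[\indicator{\sigma<e_\delta}\,g\Bigl(\sup_{\sigma\le t\le e_\delta}X_t\Bigr)\Bigr]\le\E_x g(S).
\]
Applying this with $g=\hat Q_+$, together with $\hat Q\le\hat Q_+$, yields the chain $\E_x e^{-\delta\tau}\tilde H(X_\tau)\le\E_x e^{-\delta\tau}V(X_\tau)\le V(x)$ for every $\tau$. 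At $\tau^\ast$ the chain is an equality, for three reasons: $X_{\tau^\ast}\ge x^\ast$, where $V=\tilde H$; $\sup_{[\tau^\ast,e_\delta]}X=S$ on $\{\tau^\ast<e_\delta\}$; and $\hat Q_+(S)=0$ on the complement of that event.

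Your remark that two admissible roots force the function to vanish between them is correct, and it repairs the paper's unproved ``unique root''. Finally, Step~1 should not invoke Proposition~\ref{P:INFINITESIMALGENERATOR} for $c$, since no $C^2$ regularity or growth bounds on $c',c''$ are assumed. The identity you need follows from $c=\E_x r(X_{e_\delta})$ and the strong Markov property, and in any case you return to $H-c$ immediately.
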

\begin{proof}
To match the notation of \cite{SURYA} and ease the exposition, observe that
\begin{equation}\label{eq:OSP2}
v(x)= -\sup_{\tau \in \mathcal{R}}\E_x \left(\int_0^{\tau}-f'(X_s)e^{- \delta s}ds -c( X_{\tau}) e^{-\delta \tau} \right),
\end{equation}
which can be rewritten as
\begin{equation}\label{eq:OSP3}
v(x)=H(x) -\sup_{\tau \in \mathcal{R}}\E_x (H-c)(X_\tau )e^{- \delta \tau}.
\end{equation}
On the other hand, by the definition of $r$ in \eqref{eq:r},
$$
c(x)=\E_x r(X_{e_{\delta}}). 
$$
Therefore, by the Wiener--Hopf factorization and the definition of $Q$ in Assumption~\ref{A:Assumptions4},
$$
(H-c)(x)=\delta^{-1} \E_x Q({S}). 
$$
Next, by Assumption~\ref{A:Assumptions2}(ii), for every $x \in \mathbb{R}$,
$$
\mathbb{P}_x\left(\lim_{t \to \infty}e^{-\delta t}\delta^{-1}(H-c)(X_t)=0 \right)=1. 
$$
Therefore, by Assumption~\ref{A:Assumptions2}(iii) and \cite[Theorem 4.2]{SURYA}, the stopping time $\tau^\ast$ is optimal for \eqref{eq:OSP2}, and equivalently for \eqref{eq:OSP}, completing the proof.
\end{proof}
To analyze the continuity and---in the case of unbounded variation---the smoothness of the value function \eqref{eq:OSP3}, we first record some regularity properties of the averaging function $Q$ in the following proposition.
\begin{proposition}\label{P:PropertiesofQ}
If $X$ has bounded variation, the function $Q$ in \eqref{eq:CONDITION} is continuous. If $X$ has unbounded variation, $Q \in C^{1}(\mathbb{R})$ and
$$
Q'(x)=\E_x \bigl( (f''-r') (I) \bigr).
$$
\end{proposition}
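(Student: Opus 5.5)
The natural route is to write $Q$ as a convolution against the law of $I$ and move all regularity questions onto the integrand. Since $X$ is a Lévy process, under $\P_x$ the infimum up to $e_\delta$ is $x+I$ with $I$ distributed as under $\P$. Setting $g:=f'-r$, we have
$$
Q(x)=\E\, g(x+I)=\int_{(-\infty,0]} g(x+y)\,\P(I\in dy).
$$
Continuity and differentiability of $Q$ then follow by dominated convergence from the corresponding properties of $g$, provided we have a locally uniform integrable majorant.

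The first step is to establish regularity and growth bounds for $g$. By Assumption~\ref{A:Assumptions2}, $f'$ is continuous and $|f'(x)|\le K(1+\cosh(\theta x))$. For $r=\delta^{-1}(\delta-\mathcal{L})c$, continuity should follow from Proposition~\ref{P:INFINITESIMALGENERATOR}, which gives that $\mathcal{L}c$ is continuous. That proposition requires growth bounds on $c'$ (and on $c''$ in the unbounded variation case), which are not literally among the hypotheses. I would therefore either read them into Assumption~\ref{A:Assumptions2}(i), or use the fact already asserted there, that $c$ lies in the domain of the generator, together with the integral representation. This gives a bound of the form $|r(x)|\le K'(1+\cosh(\theta' x))$.

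The main obstacle is the growth exponent. Dominated convergence needs $\E\cosh(\theta(x+I))<\infty$ locally uniformly in $x$. Since $I\le 0$, only $\E e^{-\theta I}$ matters, and this is finite because $\E e^{-\theta I}\le \E e^{-\theta X_{e_\delta}}\cdot(\text{const})$ by Wiener--Hopf. Alternatively one can argue via the exponential martingale: $\phi(-\theta)<\delta$ gives $\P(I<-z)\le C e^{-\theta' z}$ for every $\theta'$ slightly larger than $\theta$. For the generator part of $r$ it is then cleanest to assume, or to check from Assumption~\ref{A:Assumptions3Unboundedvariation}, that $r$ satisfies the same $\cosh(\theta x)$ bound. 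With $|g(z)|\le K(1+\cosh(\theta z))$ we get
$$
\sup_{|x|\le A}|g(x+I)|\le K\bigl(1+\cosh(\theta A)\,e^{-\theta I}\bigr),
$$
which is integrable. This proves continuity of $Q$ in the bounded variation case.

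In the unbounded variation case, Assumption~\ref{A:Assumptions3Unboundedvariation} gives $g\in C^1$ with $|g'|=|f''-r'|\le K(1+\cosh(\theta x))$. I would write $Q(x+h)-Q(x)=\E\int_0^h g'(x+s+I)\,ds$ and apply Fubini together with the same majorant, $\cosh(\theta(x+s+I))\le \cosh(\theta(|x|+1))\,e^{-\theta I}$-type bounds for $|h|\le1$. This yields $Q'(x)=\E g'(x+I)=\E_x\bigl((f''-r')(I)\bigr)$. Continuity of $Q'$ then follows by repeating the continuity argument with $g'$ in place of $g$.
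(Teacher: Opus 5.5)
Your proof is correct and is essentially the paper's argument: the paper also gets the proposition by dominated convergence applied to $Q(x)=\E\, g(x+I)$, with $g=f'-r$, using the $\cosh(\theta x)$ bounds of Assumptions~\ref{A:Assumptions2} and~\ref{A:Assumptions3Unboundedvariation} together with finiteness of $\E e^{-\theta I}$, which you correctly derive from $\E e^{-\theta X_{e_\delta}}=\E e^{-\theta S}\,\E e^{-\theta I}$ and $\phi(-\theta)<\delta$. You are also right that a growth bound on $r$ is tacitly needed in the bounded variation case; however, drop your alternative tail estimate with $\theta'>\theta$, since it fails in general when the exponential moments $\E e^{-\theta' X_1}$ are infinite for every $\theta'>\theta$, and your argument does not need it.
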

The proof follows by dominated convergence under Assumptions~\ref{A:Assumptions2} and~\ref{A:Assumptions3Unboundedvariation}, and is omitted.

With the regularity of $Q$ established, we now turn to the value function $v$ itself.
\begin{proposition}\label{P:FIT}
The value function $v$ is continuous and there is a constant $K'$ such that 
$$
\vert v(x)\vert \leq K'(1+\cosh(\theta x))\quad \text{for all } x \in \mathbb{R}.
$$
\end{proposition}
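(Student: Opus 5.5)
We will work from the explicit representation of $v$ provided by Lemma~\ref{L:optimalstopping}. With $\tau^\ast=\inf\{t\ge 0\colon X_t\ge x^\ast\}$ optimal, \eqref{eq:OSP3} gives
$$
v(x)=H(x)-\E_x\left((H-c)(X_{\tau^\ast})e^{-\delta\tau^\ast}\right),
$$
and $(H-c)=\delta^{-1}\E_\cdot Q(S)$. The averaging results of \cite{SURYA} (or the strong Markov property together with the Wiener--Hopf splitting $X_{e_\delta}\overset{d}{=}S+I'$ with $I'$ independent of $S$) then yield the closed form
$$
v(x)=H(x)-\delta^{-1}\E_x\left(Q(S)\indicator{S\ge x^\ast}\right).
$$
We will prove both continuity and the growth bound term by term from this formula.

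\textbf{The term $H$.} By Assumption~\ref{A:Assumptions2}(iii), $|f'(y)|\le K(1+\cosh(\theta y))$. Using $\E_x\cosh(\theta X_s)\le\cosh(\theta x)e^{\max(\phi(\theta),\phi(-\theta))s}$ and Assumption~\ref{A:AssumptionsProcess}, we get
$$
|H(x)|\le K\delta^{-1}+K(\delta-\max(\phi(\theta),\phi(-\theta)))^{-1}\cosh(\theta x).
$$
Continuity of $H$ follows as in \eqref{eq:assumptions}, applied to $f'$ instead of $f$. Write $H(x)=\E\int_0^\infty f'(x+X_s)e^{-\delta s}ds$. For $x$ in a compact set the integrand is dominated by $K(1+\cosh(\theta(|x|+1))\cosh(\theta X_s))e^{-\delta s}$, which is integrable, and continuity of $f'$ then gives continuity of $H$ by dominated convergence.

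\textbf{The stopping term.} First we bound $Q$. Since $r=\delta^{-1}(\delta-\mathcal L)c$, we need a growth bound on $r$. We will avoid requiring $\mathcal{L}c$ to satisfy a pointwise bound directly: the identity $(H-c)(x)=\E_x[(H-c)(X_{e_\delta})\ \text{averaged}]$ lets us bound $\E_x Q(S)=\delta(H-c)(x)$ by $K''(1+\cosh(\theta x))$ directly from the bounds on $H$ and $c$. For the truncated expectation $\E_x(Q(S)\indicator{S\ge x^\ast})$ we use that $Q$ is non-decreasing on $[x^\ast,\infty)$ with $Q(x^\ast)=0$, so $Q\ge 0$ there. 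Hence
$$
0\le \E_x\left(Q(S)\indicator{S\ge x^\ast}\right)\le \E_x Q(S)+\E_x\left(Q^-(S)\indicator{S<x^\ast}\right).
$$
The last term involves $Q$ only on $(-\infty,x^\ast)$. There $|Q(y)|\le\E_y|f'-r|(I)$, and on $\{S<x^\ast\}$ the value $S$ is bounded above. This is the main obstacle: controlling $Q$ for $y\to-\infty$ needs $\E e^{\theta|I|}<\infty$ and a growth bound on $r$. We will try first to obtain that bound from $c$ in the domain of $\mathcal L$ with the Proposition~\ref{P:INFINITESIMALGENERATOR}-type representation, checking that $|r|\le K(1+\cosh\theta x)$ up to a constant. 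Failing that, we will instead bound $v$ directly by comparison: $v\le c$ (take $\tau=0$), and $v\ge -\E_x\int_0^\infty|f'|(X_s)e^{-\delta s}ds-\sup_\tau\E_x|c|(X_\tau)e^{-\delta\tau}$. The supremum is bounded by $K(1+\E_x\sup_t e^{-\delta t}\cosh(\theta X_t))$, and since $e^{-\delta t}e^{\pm\theta X_t}$ is a positive supermartingale (as $\phi(\pm\theta)<\delta$), a maximal inequality combined with slightly shrinking $\theta$ gives the growth bound. This comparison route is more robust and we will adopt it for the bound.

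\textbf{Continuity of $v$.} We will use $v(x)=H(x)-\E\left((H-c)(x+X_{\tau^\ast_x})e^{-\delta\tau^\ast_x}\right)$, where $\tau^\ast_x$ is the first passage of $X$ above $x^\ast-x$. The first-passage times are monotone in $x$ and right-continuous in the level. Left continuity requires that $\P(X$ does not creep without passing$)$, which holds for the first passage into a half-line except at regular points. To sidestep this, we will use instead the formula $v=H-\delta^{-1}\E_x(Q(S)\indicator{S\ge x^\ast})$. Here $x\mapsto$ the law of $x+S$ is continuous, $Q$ is continuous by Proposition~\ref{P:PropertiesofQ}, and $Q(x^\ast)=0$, so the integrand $Q(y)\indicator{y\ge x^\ast}$ is continuous in $y$. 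Dominated convergence, using the exponential bounds above and $\E e^{\theta' S}<\infty$ for $\theta'\le\theta$, then gives continuity.
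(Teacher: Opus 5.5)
Your proof is correct. For the growth bound it takes a genuinely different route from the paper; for continuity it follows the paper in substance.

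\textbf{The paper's route.} The paper derives both claims from $H-v=\delta^{-1}\E_x[Q(S)\indicator{S\ge x^\ast}]$. Continuity comes from continuity of $Q$ plus a $\limsup/\liminf$ estimate that uses the sign of $Q$. The bound comes from the transfer estimate ``$|F|\le K_1(1+\cosh(\theta x))$ implies $|\E_xF(S)|\le K_2(1+\cosh(\theta x))$'', applied to $Q$ and hence ultimately to $f'-r$. That step tacitly needs an exponential bound on $r$, which is exactly the obstacle you flagged. Such a bound follows from Assumption~\ref{A:Assumptions3Unboundedvariation} by integrating the bound on $r'$ in the unbounded variation case, but it is not among the stated hypotheses in the bounded variation case.

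\textbf{Your growth bound.} You take $v\le c$ from $\tau=0$, and
$v\ge-\E_x\int_0^\infty|f'|(X_s)e^{-\delta s}ds-\sup_\tau\E_x[|c|(X_\tau)e^{-\delta\tau}\indicator{\tau<\infty}]$.
This uses only the bounds on $f'$ and $c$, so it is more robust than the paper's argument. You need neither a maximal inequality nor any shrinking of $\theta$. Shrinking $\theta$ would not help anyway, since $|c|$ is only controlled through $\cosh(\theta x)$ with the original $\theta$. Instead, note that $e^{-\delta t\pm\theta X_t}$ is a nonnegative supermartingale because $\phi(\pm\theta)<\delta$. Optional sampling then gives $\E_x[e^{-\delta\tau}\cosh(\theta X_\tau)\indicator{\tau<\infty}]\le\cosh(\theta x)$ for every stopping time $\tau$ directly.

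\textbf{Continuity.} Your observation that $y\mapsto Q(y)\indicator{y\ge x^\ast}$ is continuous, because $Q(x^\ast)=0$, is a tidier substitute for the paper's two-sided estimate. The one loose point is the dominating function. You appeal to ``the exponential bounds above'', but you never established such a bound for $Q$. You can close this with what you already have. By Assumption~\ref{A:Assumptions4}, $g(y):=Q(y)\indicator{y\ge x^\ast}$ is nonnegative and non-decreasing on all of $\mathbb{R}$. Hence $0\le g(S+x+h)\le g(S+x+1)$ for $|h|\le 1$. Moreover $\E g(S+x+1)=\delta(H-v)(x+1)<\infty$ by your bounds on $H$ and $v$. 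So monotone or dominated convergence yields continuity with no growth bound on $r$ beyond what Proposition~\ref{P:PropertiesofQ} already uses.
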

\begin{proof}
Let $\hat{v}(x):= H(x)-v(x)$.
By Assumption~\ref{A:Assumptions2}(iii) and dominated convergence, $H$ is continuous; hence it suffices to prove that $\hat{v}$ is continuous. 
Using \cite[Theorem 4.2]{SURYA} and the same notation as in the previous lemma,
$$ 
\hat{v}(x)= \delta^{-1}\E_x\left[Q({S}) \indicator{{S}\geq x^\ast}\right]= \delta^{-1}\E\left[Q(S+x)\indicator{S+x \geq x^\ast}\right]. 
$$
Since $Q \leq 0$ on $(-\infty, x^\ast]$, we have
\begin{multline*}
\limsup_{h \to 0} \bigl(\hat{v}(x+h)-\hat{v} (x)\bigr)  \\
\leq \limsup_{h \to 0} \delta^{-1} \E \bigl[\bigl( Q({S}+x+h)- Q({S}+x)  \bigr)  \indicator{{S}+x+h \geq x^\ast}\bigr] .
\end{multline*}
By continuity of $Q$ (Proposition~\ref{P:PropertiesofQ}) and dominated convergence (using Assumption~\ref{A:Assumptions2}(iii) to dominate the integrand by an integrable random variable), the right-hand side equals zero. 
The matching lower bound
$$
\liminf_{h \to 0} \bigl(\hat{v}(x+h)-\hat{v} (x)\bigr)\geq 0
$$
follows by the symmetric argument, using \eqref{eq:assumptions}.

For the growth bound on $v$,
$$
|v(x)| \leq \vert H(x) \vert + \delta^{-1}\E\left[\vert Q ({S}+x)\vert\indicator{{S}+x \geq x^\ast}\right].
$$
It therefore suffices to observe that if a continuous function $F$ satisfies
\begin{equation*}
      \vert F(x)\vert \leq K_1( 1+\cosh (\theta x))\quad \text{for all } x \in \mathbb{R}, \text{ with } K_1>0, 
\end{equation*}
then 
\begin{equation*}
|\E_x F(S)|+ |\E_x F(I)| \leq K_2(1+\cosh(\theta x))\quad \text{for some } K_2>0.
\end{equation*}
This follows from the inequality
\begin{equation*}
    \vert \E_x F(S) \vert \leq K_1+\frac{K_1}{2}\bigl( \E e^{\theta x +\theta S }+\E e^{-\theta x- \theta S }  \bigr) \leq C (1+ \cosh (\theta x)),
\end{equation*}
with $C= K_1+\frac{K_1}{2}\max (\E e^{\theta S}, \E e^{-\theta S})$, which is finite by Assumption~\ref{A:AssumptionsProcess}; the same argument applies with $I$ in place of $S$.
\end{proof}
The next regularity result follows from equation~(17) in \cite[Theorem~2]{MO-ECP}, observing that $P(S=0)=0$ when $X$ has unbounded variation.
\begin{proposition}\label{P:SMOOTHFIT}
If $X$ has unbounded variation, then $v \in C^{1}(\mathbb{R})$.
\end{proposition}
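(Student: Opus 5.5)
The plan is to work with the decomposition already used in the proof of Proposition~\ref{P:FIT}, namely $v = H - \hat v$. Here $\hat v(x)=\delta^{-1}\E\bigl[Q(S+x)\indicator{S+x\geq x^\ast}\bigr]$, and I would show that $H$ and $\hat v$ are separately $C^1$.

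\textbf{The function $H$.} Write $H(x)=\E\int_0^\infty f'(X_s+x)e^{-\delta s}\,ds$. Under Assumption~\ref{A:Assumptions3Unboundedvariation}, $|f''|\le K(1+\cosh(\theta x))$. Together with Assumption~\ref{A:AssumptionsProcess}, this lets me differentiate under the expectation by dominated convergence. The dominating variable is $K\int_0^\infty e^{-\delta s}\bigl(1+\cosh(\theta(X_s+x+h))\bigr)\,ds$ with $|h|\le1$, which is integrable since $\E e^{\pm\theta X_s}=e^{s\phi(\pm\theta)}$ and $\phi(\pm\theta)<\delta$. This gives $H\in C^1$ with $H'(x)=\E_x\int_0^\infty f''(X_s)e^{-\delta s}\,ds$.

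\textbf{The function $\hat v$.} I would write $\hat v(x)=\delta^{-1}\int_{[x^\ast-x,\infty)}Q(s+x)\,\P(S\in ds)$. Differentiating formally in $x$ gives two contributions:
\begin{itemize}
\item an interior term $\delta^{-1}\E\bigl[Q'(S+x)\indicator{S+x\ge x^\ast}\bigr]$, where $Q'$ is continuous with exponential growth by Proposition~\ref{P:PropertiesofQ} and the growth transfer argument in the proof of Proposition~\ref{P:FIT};
\item a boundary term coming from the moving lower limit, of the form $Q(x^\ast)$ times the mass or density of $S$ near $x^\ast-x$.
\end{itemize}
The boundary term vanishes because $Q(x^\ast)=0$. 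Concretely, for the difference quotient I would write
\begin{equation*}
Q(S+x+h)\indicator{S+x+h\ge x^\ast}-Q(S+x)\indicator{S+x\ge x^\ast}
\end{equation*}
as $[Q(S+x+h)-Q(S+x)]\indicator{S+x\ge x^\ast}$ plus a term supported on the event that $S+x$ lies between $x^\ast-h$ and $x^\ast$. On that event $|Q(S+x+h)|\le \sup_{|y-x^\ast|\le |h|}|Q(y)|=o(1)$, by continuity and $Q(x^\ast)=0$. Divided by $h$, this term is bounded by $o(1)\cdot\P(x^\ast-x-|h|\le S\le x^\ast-x+|h|)/|h|$.

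\textbf{Main obstacle.} The hard step is controlling $\P(S\in[a-|h|,a+|h|])/|h|$, i.e.\ regularity of the law of $S$. I would first try to exploit that $Q$ is $C^1$: the boundary term is then bounded by $\sup|Q'|$ near $x^\ast$ times $\P(S\in[a-|h|,a+|h|])$, with no division by $h$, since $|Q(y)|\le C|y-x^\ast|\le C|h|$ on that event. This term therefore tends to $0$ provided $S$ has no atom at $a=x^\ast-x$.

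Under unbounded variation, $0$ is regular for $(0,\infty)$, so $\P(S=0)=0$. The law of $S$ is also atomless on $(0,\infty)$: it is a mixture over $e_\delta$ of suprema, and for unbounded-variation Lévy processes the running supremum has a continuous law (alternatively, I would invoke \cite[Theorem~2]{MO-ECP}, eq.~(17), which is precisely this statement).

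The interior term converges to $\delta^{-1}\E\bigl[Q'(S+x)\indicator{S+x\ge x^\ast}\bigr]$ by dominated convergence, using a mean-value bound and the exponential moments of $S$. Continuity of this derivative in $x$ follows from the same atomless property combined with dominated convergence. Hence $\hat v\in C^1$, and so $v=H-\hat v\in C^1(\mathbb{R})$.
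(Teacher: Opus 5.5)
Your proof is correct, but it takes a different route from the paper's. The paper's proof is a single line. It invokes the smooth-fit result in equation~(17) of \cite[Theorem~2]{MO-ECP} for stopping problems with an averaging representation, and checks only its hypothesis $\P(S=0)=0$, which holds because $0$ is regular for $(0,\infty)$ under unbounded variation.

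You instead differentiate $v=H-\hat v$ directly. You write $\hat v(x)=\delta^{-1}\E[g(S+x)]$ with $g(y)=Q(y)\indicator{y\geq x^\ast}$. This $g$ is $C^1$ away from $x^\ast$ and Lipschitz near $x^\ast$, because $Q\in C^1$ and $Q(x^\ast)=0$. This buys you an explicit formula, $v'(x)=H'(x)-\delta^{-1}\E[Q'(S+x)\indicator{S+x\geq x^\ast}]$. It also shows exactly what is needed: $Q\in C^1$ with exponential growth of $Q'$, plus the fact that the law of $S$ has no atoms.

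The price is that atomlessness is needed at every level $a=x^\ast-x\geq 0$, not only at $a=0$. An atom of $S$ at some $a>0$ would create a kink of size $\delta^{-1}Q'(x^\ast)\P(S=a)$ at the continuation-region point $x^\ast-a$. You assert atomlessness on $(0,\infty)$ rather than prove it. Also, \cite{MO-ECP} is used in the paper as a smooth-fit criterion at the threshold, not as a statement about positive atoms of $S$, so don't lean on it for that.

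The missing step is short:
\begin{itemize}
\item Fix $a>0$. Since $X$ is not compound Poisson, $\P(X_t=a)=0$ for all $t>0$.
\item By the compensation formula, almost surely there is no $t$ with $X_{t-}=a$ and $\Delta X_t<0$. So on $\{S=a\}$ the supremum is attained as some $X_t=a$.
\item Hence the first entrance time $T$ of $[a,\infty)$ satisfies $T\leq e_\delta$ and $X_T=a$. Moreover $\P(T=e_\delta)=0$ by independence of $e_\delta$.
\item The strong Markov property at $T$ and regularity of $0$ for $(0,\infty)$ then give $\sup_{[T,e_\delta]}X>a$ almost surely. Therefore $\P(S=a)=0$; the case $a=0$ is the paper's observation.
\end{itemize}
With this added, your argument is self-contained. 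It also makes explicit something the paper's one-line citation does not mention: atomlessness of $S$ away from $0$ is needed for $v$ to be differentiable throughout the continuation region.
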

We will need the following auxiliary lemma, ensuring that the exponential growth bound is preserved under integration.
\begin{lemma}\label{P:primitivebounded}
Let $\theta>0$ and suppose that $v\colon\mathbb{R}\to\mathbb{R}$ satisfies
\[
|v(x)| \le K_1\bigl(1+\cosh(\theta x)\bigr)
\quad \text{for all } x\in\mathbb{R},
\]
for some constant $K_1>0$. Let $W$ be any primitive of $v$, i.e., $W'(x)=v(x)$.
Then there exists a constant $K_2>0$ such that
\[
|W(x)| \le K_2\bigl(1+\cosh(\theta x)\bigr)
\quad \text{for all } x\in\mathbb{R}.
\]
\end{lemma}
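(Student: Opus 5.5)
Write $W(x)=W(0)+\int_0^x v(y)\,dy$, which holds for any primitive by the fundamental theorem of calculus (taking $v$ locally integrable, as it is continuous in our application). Then
\[
|W(x)| \le |W(0)| + \Bigl|\int_0^x |v(y)|\,dy\Bigr| \le |W(0)| + K_1\Bigl|\int_0^x \bigl(1+\cosh(\theta y)\bigr)\,dy\Bigr| = |W(0)| + K_1\Bigl(|x| + \frac{\sinh(\theta |x|)}{\theta}\Bigr),
\]
using that $\cosh$ is even, so the integral depends only on $|x|$.

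It remains to dominate each term by a multiple of $1+\cosh(\theta x)$. First, $\sinh(\theta|x|)\le\cosh(\theta|x|)=\cosh(\theta x)$. Second, $|x|\le \theta^{-1}\sinh(\theta|x|)\le \theta^{-1}\cosh(\theta x)$, since $\sinh(t)\ge t$ for $t\ge 0$ (its derivative $\cosh t$ is at least $1$ and $\sinh 0=0$). Third, $|W(0)|\le |W(0)|\bigl(1+\cosh(\theta x)\bigr)$. Combining these,
\[
|W(x)|\le K_2\bigl(1+\cosh(\theta x)\bigr),\qquad K_2:=|W(0)| + \frac{2K_1}{\theta}.
\]
If $W(0)=0$, the constant is still positive because $K_1>0$.

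There is no real obstacle in this argument. The only point needing care is that the estimate holds uniformly for negative $x$ as well, and this is handled by the symmetry of $\cosh$ together with the elementary inequality $t\le\sinh t$.
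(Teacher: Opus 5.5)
Your proof is correct and follows the route the paper indicates: the paper omits the argument, describing it only as the fundamental theorem of calculus plus elementary hyperbolic inequalities, and you carry this out with the explicit constant $K_2=|W(0)|+2K_1/\theta$. Your caveat about local integrability is not actually needed, because the growth bound makes $v$ locally bounded, so $W$ is locally Lipschitz and $W(x)=W(0)+\int_0^x v(y)\,dy$ holds in the Lebesgue sense for any primitive.
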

The proof is a routine application of the fundamental theorem of calculus together with elementary inequalities for hyperbolic functions, and is omitted.
\begin{lemma}\label{L:Primitive}
Let $W$ be a primitive of $v$. Then $W$ belongs to the domain of the infinitesimal generator, and the map
\begin{equation*}
x\ \mapsto \mathcal{L}W(x)-\delta W(x) +f(x)
\end{equation*}
is constant on $(-\infty,x^\ast)$ and non-decreasing on $(x^\ast,\infty)$.
\end{lemma}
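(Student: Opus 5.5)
The plan is to differentiate the map $g(x):=\mathcal{L}W(x)-\delta W(x)+f(x)$ formally in $x$. Because $\mathcal{L}$ commutes with spatial translations, one expects
\[
g'=\mathcal{L}v-\delta v+f'.
\]
The sign of $\mathcal{L}v-\delta v+f'$ is then read off from the optimal stopping problem \eqref{eq:OSP}: it vanishes in the continuation region $(-\infty,x^\ast)$ and is non-negative everywhere. To avoid needing $\mathcal{L}v$ pointwise, I will work with finite differences instead of derivatives.

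\emph{Step 1 (domain).} Since $v$ is continuous with $|v(x)|\le K'(1+\cosh(\theta x))$ (Proposition~\ref{P:FIT}), Lemma~\ref{P:primitivebounded} gives the same growth bound for $W$.
\begin{itemize}
\item In the bounded variation case $W\in C^1$ with $W'=v$, so Proposition~\ref{P:INFINITESIMALGENERATOR}(ii) applies directly.
\item In the unbounded variation case $W''=v'$ exists and is continuous by Proposition~\ref{P:SMOOTHFIT}, but I also need a cosh-type bound on $v'$. I plan to get it from the representation
\[
v=H-\delta^{-1}\E\bigl[Q(S+x)\indicator{S+x\ge x^\ast}\bigr].
\]
Differentiating $H$ uses Assumption~\ref{A:Assumptions3Unboundedvariation} on $f''$. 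Differentiating the second term uses Proposition~\ref{P:PropertiesofQ} together with $Q(x^\ast)=0$, so no boundary term appears. The resulting averages of $f''$ and $Q'$ are then bounded exactly as in the final step of the proof of Proposition~\ref{P:FIT}.
\end{itemize}
Then Proposition~\ref{P:INFINITESIMALGENERATOR}(i) applies, and the same holds for every translate $W(\cdot+h)$.

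\emph{Step 2 (finite differences).} For $h>0$ set $\Delta_hW(x)=W(x+h)-W(x)=\int_0^h v(x+s)\,ds$. By linearity and translation invariance of $\mathcal{L}$,
\[
g(x+h)-g(x)=(\mathcal{L}-\delta)\Delta_hW(x)+f(x+h)-f(x).
\]
Using Dynkin's formula as in the proof of Proposition~\ref{P:INFINITESIMALGENERATOR}, this becomes
\[
g(x+h)-g(x)=\lim_{t\to0^+}t^{-1}\Bigl(\E_x\bigl[e^{-\delta t}\Delta_hW(X_t)\bigr]-\Delta_hW(x)+\E_x\int_0^t e^{-\delta s}\bigl(f(X_s+h)-f(X_s)\bigr)\,ds\Bigr).
\]
Writing both $\Delta_hW$ and $f(\cdot+h)-f$ as $\int_0^h(\cdot)\,ds'$ and using Fubini, the quantity inside the limit equals $\int_0^h A_t(x+s')\,ds'$, where
\[
A_t(y):=\E_y\Bigl[e^{-\delta t}v(X_t)+\int_0^t e^{-\delta s}f'(X_s)\,ds\Bigr]-v(y).
\]

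\emph{Step 3 (sign from the OSP).} The Markov property gives the dynamic programming inequality $A_t(y)\ge 0$ for every $y$: $v$ is an infimum, so stopping at $t$ and then continuing optimally is admissible. This gives $g(x+h)\ge g(x)$ for all $x$ and $h>0$, which is monotonicity on $(x^\ast,\infty)$ (indeed everywhere).

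For $y<x^\ast$, Lemma~\ref{L:optimalstopping} says $\tau^\ast$ is optimal. Hence $e^{-\delta(t\wedge\tau^\ast)}v(X_{t\wedge\tau^\ast})+\int_0^{t\wedge\tau^\ast}e^{-\delta s}f'(X_s)\,ds$ is a $\P_y$-martingale. It follows that $A_t(y)=\E_y[(\cdots)\indicator{\tau^\ast<t}]$-type corrections, which are $o(t)$ locally uniformly in $y$ on compacts of $(-\infty,x^\ast)$. Hence $t^{-1}\int_0^hA_t(x+s')\,ds'\to0$ whenever $[x,x+h]\subset(-\infty,x^\ast)$, and $g$ is constant there.

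\emph{Main obstacle.} The hardest step is the $o(t)$ estimate near, but below, $x^\ast$. It requires $\P_y(\tau^\ast\le t)=o(1)$ uniformly for $y\le x^\ast-\epsilon$, together with the growth bounds, so that the correction divided by $t$ vanishes. I would handle it by splitting $A_t(y)$ according to whether $X$ exits $(y-1,x^\ast)$ before $t$. Continuous exits have probability $o(t)$. Exits by large jumps contribute at rate $\Pi$ and carry the factor $v-(\text{continuation value})$; this factor vanishes for the martingale part by optimal stopping, so it also contributes $o(t)$. The uniformity on compacts is then obtained by letting $\epsilon\downarrow0$.
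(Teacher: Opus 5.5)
Your argument is correct, and for the substantive part it takes a different route from the paper. Your Step~1 is the paper's domain check, and it is more careful. Proposition~\ref{P:INFINITESIMALGENERATOR}(i) needs a $\cosh$ bound on $W''=v'$, while Proposition~\ref{P:SMOOTHFIT} only gives $v\in C^1$, and the paper does not address this. For constancy and monotonicity the paper gives no argument of its own. It defers to \cite[Lemma~1]{MO}, proved there for a Dynkin game with two barriers, and sends the lower barrier to $-\infty$.

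Your identity
\[
g(x+h)-g(x)=\lim_{t\to0^+}t^{-1}\int_0^hA_t(x+s')\,ds'
\]
replaces that citation with a self-contained argument about the dynamic-programming defect $A_t$. The sign $A_t\ge0$ comes from restarting with $\tau^\ast$ at time $t$. This is rigorous without measurable selection, because Lemma~\ref{L:optimalstopping} makes $\tau^\ast$ optimal from every starting point. The vanishing on $(-\infty,x^\ast)$ comes from the martingale property of the stopped process, which uses $v=c$ on $[x^\ast,\infty)$. Your route never needs $\mathcal{L}v$ pointwise, which matters in the bounded variation case where $v$ is only known to be continuous. It also gives monotonicity on all of $\mathbb{R}$, hence $g\ge g(x^\ast)$ on $[x^\ast,\infty)$, which is what the proof of Theorem~\ref{T:DISCOUNTEDPROBLEMSOLUTION} uses. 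The cost is that the short-time estimate, hidden in the citation, is now yours to prove.

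That estimate is true, but your sketch should be corrected. $\P_y(\tau^\ast\le t)=o(1)$ is not enough. You need $\P_y(\tau^\ast\le t)=O(t)$ uniformly for $y$ in compacts of $(-\infty,x^\ast-\epsilon]$. This follows from the compensation formula for jumps of size at least $\epsilon/2$. It also uses that the process with those jumps removed (so all remaining jumps are smaller than $\epsilon/2$) moves by $\epsilon$ before time $t$ with probability $o(t)$. Combine this with the strong Markov identity
\[
A_t(y)=\E_y\bigl[\indicator{\tau^\ast<t}\,e^{-\delta\tau^\ast}A_{t-\tau^\ast}(X_{\tau^\ast})\bigr].
\]
Here $\sup_{s\le t}|A_s(z)|\to0$ locally uniformly in $z$, by continuity of $v$ and the growth bounds, and it is dominated by $C(1+\cosh(\theta z))$. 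Dominated convergence against $\Pi$ restricted to $\{|z|\ge\epsilon/2\}$ then gives $o(t)$; this measure integrates $e^{\theta|z|}$ by Assumption~\ref{A:AssumptionsProcess}. On the event with no large jump, $X_{\tau^\ast}<x^\ast+\epsilon/2$ is bounded, so no moments beyond $\theta$ are needed. The post-entry factor is small because $v$ is continuous and $t-\tau^\ast\to0$, not because of optimal stopping. In the stopping region $A_s\ge0$ is in general nonzero.
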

\begin{proof}
The regularity of $v$ established in Propositions~\ref{P:FIT} and~\ref{P:SMOOTHFIT}, together with Lemma~\ref{P:primitivebounded} applied to $W$, allow us to apply Proposition~\ref{P:INFINITESIMALGENERATOR}.
The proof then follows the same arguments as in \cite[Lemma~1(i)--(ii)]{MO}, where a Dynkin game is considered. In our setting there is only one barrier instead of two, and the same reasoning applies by taking (in the notation of that paper) $-\infty$ in place of $a^\ast_{\epsilon}$.
\end{proof}
From the previous results, we can prove the main theorem of the article.
\begin{proof}[Proof of Theorem~\ref{T:DISCOUNTEDPROBLEMSOLUTION}]
By Lemmas~\ref{L:optimalstopping},~\ref{P:primitivebounded}, and~\ref{L:Primitive}, together with Theorem~\ref{T:Verification}, it suffices to construct a primitive of $v$ for which the map $x \mapsto \mathcal{L}W(x) - \delta W(x) + f(x)$ vanishes on $(-\infty, x^\ast]$ (which, by Lemma~\ref{L:Primitive}, reduces to vanishing at the single point $x^\ast$, since this map is constant on that interval). 
Consider 
$$W(x):= \int_{x^\ast}^x v(y)\,dy,$$
so that $W(x^\ast) = 0$, and let
$$
u(x)= W(x) +\frac{f(x^\ast) +\mathcal{L}W(x^\ast)}{\delta}. 
$$
Since $\mathcal{L}$ annihilates constants, $\mathcal{L}u = \mathcal{L}W$, and using $W(x^\ast) = 0$, we obtain
$$
\mathcal{L}u(x^\ast)-\delta u(x^\ast) + f(x^\ast) = \mathcal{L}W(x^\ast) - \bigl(f(x^\ast)+\mathcal{L}W(x^\ast)\bigr) + f(x^\ast) = 0.
$$
Moreover, $u' = v \leq c$ holds everywhere with equality on $[x^\ast,\infty)$ by Lemma~\ref{L:optimalstopping}, and the growth bound $|u(x)| \leq K(1+\cosh(\theta x))$ follows from Lemma~\ref{P:primitivebounded} applied to $W$. Thus $u$ satisfies all the hypotheses of Theorem~\ref{T:Verification}, and the conclusion follows.
\end{proof}
%%%%%%%%%%%%%%%%%%%%%%%%%%%%%%%%%%%%%%%%%%%%%%%%%%%%%%%%%%%%%%%%%%%%%%%%%%%%%
%%%%%%%%%%%%%%%%%%%%%%%%%%%%%%%%%%%%%%%%%%%%%%%%%%%%%%%%%%%%%%%%%%%%%%%%%%%%%
%%%%%%%%%%%%%%%%%%%%%%%%%%%%%%%%%%%%%%%%%%%%%%%%%%%%%%%%%%%%%%%%%%%%%%%%%%%%%
\section{Applications and examples}
In this section, we introduce a motivating application and some examples in which the threshold $x^\ast$ can be obtained explicitly.
\subsection{Application: Optimization in pollution abatement}%\label{SS:pollution}
We model a company that seeks to optimally adjust its pollution levels (or
abatement efforts) over time. Let $Z_{t}>0$ denote the current emission rate
(the instantaneous flow of $CO_{2}$ emissions). We assume that 
$Z_{t}=e^{X_{t}}$, where $X=\{X_{t}\colon t\geq 0\}$ is a general L\'evy process as defined in Section~\ref{subsection:preliminaries}. 
This logarithm specification ensures the positivity of $Z_{t}$ and captures
multiplicative shocks: a negative jump in $X_{t}$ corresponds to a
proportional reduction in emission rate (for instance, due to improved
technology or temporary efficiency gains), while a positive jump corresponds
to a sudden increase in emissions, such as those arising from production
surges.

In this setting, the regulator (or company) can control the emission rate
through abatement actions. The cumulative log-scale reduction in the emission rate is encoded by the control process $D=\{D_t\colon t\geq 0\}$, so that the controlled state
$$X_{t}^{D}=X_{t}-D_{t}$$
yields the controlled emission rate
\begin{equation*}
Z_{t}^{D}=e^{X_{t}^{D}}= Z_t \, e^{-D_t}.
\end{equation*}
Without control ($D \equiv 0$), $Z_t$ follows its natural exponential L\'evy dynamics, representing the firm's spontaneous emission rate. The factor $e^{-D_t}$ thus quantifies the multiplicative emission reduction achieved through abatement up to time $t$.

The process $D$ is non-decreasing and right-continuous by Definition~\ref{D:Admissiblestrategies}: increases in $D_{t}$
correspond to extra abatement efforts, which may correspond, for instance, to investments in clean technologies or to regulatory mandates. Hence the controlled state process reflects the combination of natural emission dynamics and deliberate abatement interventions.
Since $Z_{t}^{D}=e^{X_{t}^{D}}$, lower values of $X_{t}^{D}$
correspond to lower emissions; thus increases in $D_t$ reduce both $X_t^D$ and the emission rate $Z_t^D$.

The cost structure of the problem involves two functions. The \emph{running cost} $F(Z_t^D)$ is associated with the emission rate $Z_t^D$, capturing long-run economic or social costs such as environmental damage and regulatory penalties. A natural assumption is that $F$ is non-negative and non-decreasing in $Z_t^D$, since higher emissions entail higher environmental costs. The \emph{marginal abatement cost} $C(Z_t^D)$ represents the cost of further reducing emissions; it is typically non-negative and non-increasing in $Z_t^D$, reflecting that abatement is cheap when emissions are high---effective and inexpensive measures such as improving energy efficiency or switching to cleaner fuels are available---but becomes prohibitively expensive once emissions are already low, where the remaining reductions require radical technological or behavioral changes.
%%%%%%%%%%%%%%%%%%%%%%%%%%%%%%%%%%%%%%%%%%%%%%%%%%%%%%%%%%%%%%%%%%%%%%%%%%%%%
%%%%%%%%%%%%%%%%%%%%%%%%%%%%%%%%%%%%%%%%%%%%%%%%%%%%%%%%%%%%%%%%%%%%%%%%%%%%%
%%%%%%%%%%%%%%%%%%%%%%%%%%%%%%%%%%%%%%%%%%%%%%%%%%%%%%%%%%%%%%%%%%%%%%%%%%%%%

A natural and tractable particular case is the \emph{exponential cost
model}, obtained when 
$$
F(z) = Az^{\alpha}, \qquad C(z) = Bz^{-\beta},
\quad A,B,\alpha,\beta > 0.
$$
Passing to the log scale via $f(x)=F(e^x)$ and $c(x)=C(e^x)$, we obtain a running cost $f(X_t^D) = Ae^{\alpha X_t^D}$ that is non-decreasing in $X_t^D$, and a marginal abatement cost $c(X_t^D) = Be^{-\beta X_t^D}$ that is non-increasing in $X_t^D$, consistent with the economic interpretation
above. Assumption~\ref{A:AssumptionsProcess} then requires that there exist $\theta \geq \max(\alpha,\beta)$ in the domain of $\phi$ with $\max(\phi(\theta), \phi(-\theta)) < \delta$.
Under these conditions, Theorem~\ref{T:DISCOUNTEDPROBLEMSOLUTION} applies, and the optimal policy is a reflecting control at a critical barrier $x^{\ast}$. The corresponding emission threshold $z^\ast := e^{x^\ast}$ has a natural interpretation: it is the highest emission level tolerated under the optimal policy, beyond which abatement is triggered.
\begin{itemize}
\item If $X_{t}^{D}>x^{\ast}$, the optimal strategy is to increase $D_{t}$
(i.e., to abate emissions) so that the controlled state is pushed back to $%
x^{\ast }$ immediately.
\item If $X_{t}^{D}<x^{\ast }$, the optimal strategy is not to intervene, allowing the emission dynamics to evolve freely
until $X_{t}^{D}$ reaches $x^{\ast }$. The intuition is that when the
system is already in a ``clean'' state,
additional abatement yields little benefit while generating unnecessary
costs; no further abatement is undertaken until the state rises back to $x^{\ast}$.
\end{itemize}
We now derive $x^*$ explicitly. Since
$\mathcal{L}e^{\eta} x = \phi(\eta)e^{\eta} x$ for any $\eta$ in the domain of $\phi$, 
the function $r$
defined in \eqref{eq:r} satisfies
$$
r(x) = \frac{\delta - \phi(-\beta)}{\delta}\,B\,e^{-\beta x},
$$
so that
$$
(f'-r)(x) = A\alpha\,e^{\alpha x}
  - \frac{\delta-\phi(-\beta)}{\delta}\,B\,e^{-\beta x}.
$$
The Wiener--Hopf factorization at $-\beta$ gives the identity $\frac{\delta-\phi(-\beta)}{\delta}\,\E(e^{-\beta I}) = 1/\E(e^{-\beta S})$. Using this in $\E_x[(f'-r)(I)] = \E[(f'-r)(I+x)]$, the map \eqref{eq:CONDITION} becomes
$$
Q(x) = \E_x\bigl[(f'-r)(I)\bigr]
= K_1\,e^{\alpha x} - K_2\,e^{-\beta x},
$$
where
$$
K_1 = A\alpha\,\E(e^{\alpha I}), \qquad
K_2 = \frac{B}{\E(e^{-\beta S})}.
$$
Since $K_1, K_2 > 0$ and $Q'(x) = \alpha K_1 e^{\alpha x} + \beta K_2 e^{-\beta x} > 0$
for all $x$, the map $Q$ is strictly increasing, so
Assumption~\ref{A:Assumptions4} is automatically satisfied and the
root $x^*$ is unique. Setting $Q(x^*) = 0$ gives
$K_1 e^{(\alpha+\beta)x^*} = K_2$, whence
\begin{equation}\label{eq:xstar}
x^* = \frac{1}{\alpha+\beta}
  \log\frac{B}{A\alpha\,\E(e^{\alpha I})\,\E(e^{-\beta S})}\geq 
  \frac{1}{\alpha+\beta}
  \log\frac{B}{A\alpha},
\end{equation}
where the inequality follows from $\E(e^{\alpha I}) \leq 1$ and $\E(e^{-\beta S}) \leq 1$, one of them strict for a non-trivial L\'evy process.

This expression yields the following insights:
\begin{itemize}
     \item $z^\ast=e^{x^*}\geq (B/(A\alpha))^{1/(\alpha+\beta)}$ is bounded away from zero, reflecting the unbounded growth of the marginal abatement cost as $z \to 0^+$, which makes pushing emissions arbitrarily close to zero prohibitively expensive.
     \item The dependence of ${z^\ast}$ on $A$ and $B$ is only through the ratio $B/A$: rescaling both costs by a common factor leaves the optimal threshold unchanged.
    \item As $\alpha \to 0$, $x^\ast \to \infty$ (and hence $z^\ast = e^{x^\ast} \to \infty$). This matches the intuition that when the running cost becomes nearly constant, abatement provides no benefit and the optimal policy is to never intervene.
\end{itemize}

The formula \eqref{eq:xstar} for $x^*$ holds for any L\'evy process satisfying the standing assumptions.
To make it explicit, consider the two-sided jump-diffusion process
\begin{equation}\label{eq:CPPmodel2}
X_t = \mu t + \sigma B_t
  + \sum_{i=1}^{N^{(1)}_t} Y^{(1)}_i
  - \sum_{i=1}^{N^{(2)}_t} Y^{(2)}_i,
\end{equation}
where $B = \{B_t\}$ is a standard Brownian motion; for $k=1,2$, the
processes $N^{(k)} = \{N^{(k)}_t\}$ are independent Poisson processes with
intensities $\lambda_k$; and $\{Y^{(k)}_i\}_{i \geq 1}$ is an i.i.d.\ sequence of exponential random variables with parameter $\eta_k$. The five families $B$, $N^{(1)}$, $N^{(2)}$, $Y^{(1)}$, $Y^{(2)}$ are mutually independent. 
The characteristic exponent of $X$ is
$$
\phi(z) = \tfrac{1}{2}\sigma^2 z^2 + \mu z
  + \frac{\lambda_1 z}{\eta_1 - z}
  - \frac{\lambda_2 z}{\eta_2 + z}.
$$
The equation $\phi(z) = \delta$ has four real roots $\rho_1, \rho_2 >
0$ and $-\gamma_1, -\gamma_2 < 0$ satisfying
$$
-\gamma_2 < -\eta_2 < -\gamma_1 < 0
  < \rho_1 < \eta_1 < \rho_2.
$$
By \cite[Theorem~3.1]{Firstpassage}, the Wiener--Hopf factors of $X$
killed at rate $\delta$ are
\begin{equation}\label{eq:WHfactors}
\E(e^{z S})
  = \frac{\rho_1\rho_2(\eta_1 - z)}
         {\eta_1(\rho_1 - z)(\rho_2 - z)},
\qquad
\E(e^{z I})
  = \frac{\gamma_1\gamma_2(\eta_2 + z)}
         {\eta_2(\gamma_1 + z)(\gamma_2 + z)}.
\end{equation}
Substituting into \eqref{eq:xstar} yields
\begin{equation*}
x^* = \frac{1}{\alpha+\beta}\left[
  \log\!\left(\frac{B}{A\alpha}\right)
  + \log\!\left(
    \frac{\eta_1\eta_2(\gamma_1+\alpha)(\gamma_2+\alpha)(\rho_1+\beta)(\rho_2+\beta)}
         {\gamma_1\gamma_2\rho_1\rho_2(\eta_2+\alpha)(\eta_1+\beta)}\right)
  \right].
\end{equation*}
In this example, Assumption~\ref{A:AssumptionsProcess} holds for $\theta = \max(\alpha,\beta)$ iff $\max(\phi(\theta), \phi(-\theta)) < \delta$, which by the root structure above is equivalent to $\max(\alpha,\beta) < \min(\gamma_1,\rho_1)$.
\subsection{Example: linear controls, monomial running cost}
Let $f(x)=Ax^{2n}+B$ and $c(x)=-x$, with $A>0$, $B \in \mathbb{R}$, and $n \in \mathbb{N}$. 
Since $f$ and $c$ are polynomials, they are dominated by $K(1+\cosh(\theta x))$ for any $\theta > 0$, so Assumption~\ref{A:Assumptions2}(iii) holds as soon as the process satisfies Assumption~\ref{A:AssumptionsProcess} for some $\theta > 0$. In this case $r(x)= \E X_{e_\delta} - x$ and the function
$$
(f'-r)(x)= 2An x^{2n-1} - (\E X_{e_{\delta}}-x) = 2An x^{2n-1} + x - \E X_{e_\delta}
$$
is strictly increasing, so it admits a unique root.

In the particular case $n=1$, using the Wiener--Hopf identity $\E X_{e_\delta} = \E S + \E I$, 
\begin{multline*}
    Q(x) = \E_x[(f'-r)(I)] = 2A\E I + (2A+1)x - \E X_{e_\delta} \\
    = 2A \E I + (2A+1) x - \E S - \E I, 
\end{multline*}
whence
$$
x^\ast= \frac{\E S - 2A\E I}{2A+1}.
$$
As an illustration, consider the process defined in \eqref{eq:CPPmodel2}. From \eqref{eq:WHfactors}, $\E S = \rho_1^{-1} + \rho_2^{-1} - \eta_1^{-1}$ and $\E I = \eta_2^{-1} - \gamma_1^{-1} - \gamma_2^{-1}$, yielding
$$
x^\ast = \frac{\rho_1^{-1}+\rho_2^{-1}-\eta_1^{-1} + 2A(\gamma_1^{-1}+\gamma_2^{-1}-\eta_2^{-1})}{2A+1}.
$$
The same argument applies more generally: for any convex running cost $f$ satisfying the regularity and integrability conditions of Assumptions~\ref{A:Assumptions2} and~\ref{A:Assumptions3Unboundedvariation}, with $f'(x) \to \pm \infty$ as $x \to \pm\infty$, there exists a unique root $x^\ast$.
\subsection{Example: exponential running cost and quadratic control cost}
Let $f(x)=e^{x}$ and $c(x)=Ax^2+B$, with $A>0$ and $B\in \mathbb{R}$. 
In this example
$$
r(x)= Ax^2+B-\frac{A}\delta\left(2x\E X_1+\var X_1\right),
$$
thus 
$$
(f'-r)(x)=  e^x-Ax^2-B +\frac{A}\delta\left(2x\E X_1+\var X_1\right), 
$$
then
$$
Q(x)=\E(e^I)e^x-A\E(x+I)^2-B+\frac{A}{\delta}\left(2\E(x+I)\E X_1+\var X_1\right).
$$
To study the monotonicity of $Q(x)$ we differentiate
\begin{multline*}
Q'(x)=\E(e^I)e^x-2A\E(x+I)+\frac{2A}\delta\E X_1\\
\geq e^{x+\E I}-2A\E(x+I)+\frac{2A}\delta\E X_1> 0,
\end{multline*}
provided $2A\leq e$ and $\E X_1> 0$ (where we used Jensen's inequality).
Hence, $Q$ is strictly increasing and has exactly one root, satisfying Assumption~\ref{A:Assumptions4}. 

As an illustration, consider a compound Poisson process $X=\{X_t\colon t\geq 0\}$ with double-sided exponential jumps, given by
\begin{equation*}
X_t=x+\sum_{i=1}^{N^{(1)}_t}Y^{(1)}_i-\sum_{i=1}^{N^{(2)}_t}Y^{(2)}_i,
\end{equation*}
where $N^{(1)}=\{N^{(1)}_t\colon t\geq 0\}$ and 
$N^{(2)}=\{N^{(2)}_t\colon t\geq 0\}$ 
are two Poisson processes with positive intensities $\lambda_1,\lambda_2$ respectively;
$Y^{(1)}=\{Y^{(1)}_i\colon i\geq 1\}$ and $Y^{(2)}=\{Y^{(2)}_i\colon i\geq 1\}$ 
are two sequences of independent exponentially distributed random variables with respective positive parameters 
$\alpha_1,\alpha_2$.
The four families $N^{(1)}$, $N^{(2)}$, $Y^{(1)}$, $Y^{(2)}$ are mutually independent. Again, using \cite[Theorem~3.1]{Firstpassage}, the equation $\phi(z) = \delta$ has two roots $\gamma_1,\gamma_2$ (here, with $\gamma_1$ a positive root and $-\gamma_2$ a negative root) that satisfy $-\alpha_2<-\gamma_2<0<\gamma_1<\alpha_1$, and 
\begin{align*}
f_I(x)&=\frac{\gamma_2}{\alpha_2}\delta_0(x)+\frac{\alpha_2-\gamma_2}{\alpha_2}\gamma_2e^{\gamma_2x},\quad x\leq 0,\\
f_S(x)&=\frac{\gamma_1}{\alpha_1}\delta_0(x)+\frac{\alpha_1-\gamma_1}{\alpha_1}\gamma_1 
e^{-\gamma_1x},\quad x\geq 0,
\end{align*}
where $\delta_0$ denotes the Dirac mass at $x=0$. A direct computation yields
\begin{multline*}
    Q(x)  = \frac{\gamma_2(\alpha_2 + 1)}{\alpha_2(1 + \gamma_2)} e^x- B \\+ A \left( -x^2 + 2\left(\frac{\lambda_1}{\alpha_1} - \frac{\lambda_2}{\alpha_2}-\frac{\gamma_2-\alpha_2}{\alpha_2 \gamma_2}\right) x 
     - \frac{2\bigl(\frac{\lambda_1}{\alpha_1} - \frac{\lambda_2}{\alpha_2}\bigr)(\alpha_2 - \gamma_2)}{\alpha_2 \gamma_2} \right. \\ \left. - \frac{2(\alpha_2 - \gamma_2)}{\alpha_2 \gamma_2^2} + \frac{2\lambda_2}{\alpha_2^2} + \frac{2\lambda_1}{\alpha_1^2} \right). 
\end{multline*}
In this setting, the only delicate requirement is Assumption~\ref{A:AssumptionsProcess}, which holds iff there exists $\theta \geq 1$ such that 
$$  
\max \left(\frac{\theta \lambda_1 }{\alpha_1-\theta}- \frac{\theta \lambda_2 }{\alpha_2 +\theta}, \ \frac{\theta \lambda_2 }{\alpha_2-\theta}- \frac{\theta \lambda_1 }{\alpha_1 +\theta}\right) < \delta.
$$
For instance, choosing
$$
\lambda_1=4/7, \ \lambda_2=3/7,  \ \alpha_1=3,  \ \alpha_2=4,\ A=B=\delta=\theta=1, 
$$ 
we obtain $x^{\ast} \approx -0.0377$.

\end{document}